\providecommand\@dotsep{5}
\renewcommand{\listoftodos}[1][\@todonotes@todolistname]{%
  \@starttoc{tdo}{#1}}
\def\Ddots{\mathinner{\mkern1mu\raise\p@
\vbox{\kern7\p@\hbox{.}}\mkern2mu
\raise4\p@\hbox{.}\mkern2mu\raise7\p@\hbox{.}\mkern1mu}}
\title[Inverse Problems, Floquet Isospectrality and the Hilbert--Chow Morphism]{Inverse Eigenvalue Problems, Floquet Isospectrality and the Hilbert--Chow Morphism}
\author[J. Cobb]{John Cobb}
\address{John Cobb, Department of Mathematics and Statistics, Auburn University, Auburn, AL}
\email{jdcobb3@gmail.com}
\urladdr{\href{https://johndcobb.github.io}{https://johndcobb.github.io}}
\author[M. Faust]{Matthew Faust}
\address{Matthew Faust, Department of Mathematics, Michigan State University, East Lansing, MI 48824, USA} \email{mfaust@msu.edu}
\urladdr{\href{https://mattfaust.github.io}{https://mattfaust.github.io}}
\author[A. Kretschmer]{Andreas Kretschmer}
\address{Andreas Kretschmer, Institut für Mathematik, Humboldt-Universität zu Berlin, Berlin, Germany} \email{andreas.kretschmer@hu-berlin.de}
\urladdr{\href{https://sites.google.com/view/andreas-kretschmer/}{https://sites.google.com/view/andreas-kretschmer/}}
\thanks{{\em 2020 Mathematics Subject Classification.} Primary: 15A29, 14C05, 81Q35. Secondary: 15A83, 47B36, 14B10, 35J10, 81Q05.}
\theoremstyle{plain}
\newtheorem{theorem}{Theorem}[section]
\newcommand{\R}{\mathbb{R}}
\newtheorem{corollary}[theorem]{Corollary}
\newtheorem{lemma}[theorem]{Lemma}
\newtheorem{defn}[theorem]{Definition}
\newtheorem{proposition}[theorem]{Proposition}
\newcommand{\CC}{\mathbb{C}}
\newcommand{\TT}{\mathbb{T}}
\newcommand{\ZZ}{\mathbb{Z}}
\newcommand{\NN}{\mathbb{N}}
\renewcommand{\AA}{\mathbb{A}}
\newcommand{\bfz}{\bf{0}}
\DeclareMathOperator{\mult}{mult}
\DeclareMathOperator{\Spec}{Spec}
\DeclareMathOperator{\Hilb}{Hilb}
\DeclareMathOperator{\Sym}{Sym}
\DeclareMathOperator{\Hom}{Hom}
\DeclareMathOperator{\HC}{HC}
\theoremstyle{plain}
\theoremstyle{definition}
\newtheorem{definition}{Definition}
\newtheorem{conjecture}{Conjecture}
\newtheorem{example}[theorem]{Example}
\newtheorem{rmk}[theorem]{Remark}
\newcommand{\demph}[1]{{\sl #1}}
\begin{document}

	\begin{abstract}
    When can one change the diagonal of a matrix without changing its spectrum? We completely answer this question over an algebraically closed field of characteristic zero or larger than the size of the matrix: An $n \times n$ matrix $A$ admits a nonzero diagonal matrix $D$ such that $A$ and $A+D$ have the same spectrum if and only if, for some size $k$, the $k \times k$ principal minors of $A$ are not all equal. This relates to the classical additive inverse eigenvalue problem in numerical analysis and has implications for existence and rigidity results in the theory of Floquet isospectrality of discrete periodic operators in solid state physics.   The proof employs new techniques involving Hilbert schemes of points and the infinitesimal structure of the Hilbert--Chow morphism.
	\end{abstract}
	
	\maketitle 

    \vspace{-0.25in}

	\section{Introduction}

    \renewcommand*{\thetheorem}{\Alph{theorem}}

    The class of inverse eigenvalue problems is among the most studied in matrix theory and applied analysis, with contributions from nearly every field of mathematics; for a nice survey, see \cite{MoodySurvey}. Let $A$ be a fixed $n \times n$ matrix over an algebraically closed field~$K$. One of the most famous among them is the \textit{additive inverse eigenvalue problem (AIEP)}:
    \begin{equation}\tag{$*$}\label{prob:additive inverse eigenvalue}
        \parbox{\dimexpr\linewidth-4em}{%
        Given a multiset $\{\lambda_1,\dots,\lambda_n\} \subset K$, find a diagonal matrix $D$ such that the spectrum of $A+D$ is $\{\lambda_1,\dots,\lambda_n\}$.
      }
    \end{equation}
    Over $\CC$, Friedland showed that problem \eqref{prob:additive inverse eigenvalue} is always solvable and, for a generic choice of target spectrum, there are exactly $n!$ different solutions \cite{friedland1972matrices}. Other proofs of \eqref{prob:additive inverse eigenvalue} appear in \cites{FRIEDLAND197715, alexander1978additive}, and other variations of the same problem are investigated in \cites{kapi, VPapan}.
    
    Friedland's result can be rephrased to say that every fiber of the morphism
    \begin{equation*}
        F_A: \operatorname{Diag}_n(\CC) \to \CC^n/\Sigma_n,  \hspace{0.3in} D \mapsto \sigma(A+D),
    \end{equation*}
    sending a diagonal matrix $D$ to the spectrum of $A+D$, has $n!$ points when counted with multiplicities. Here, $\Sigma_n$ is the symmetric group on $n$ elements. In algebro-geometric terms, $F_A$ is finite and flat of degree $n!$, and in particular surjective.
    
    In this paper, we study the fibers of $F_A$, especially the fiber over the spectrum of $A$ itself, consisting of the diagonal matrices $D$ such that $A$ and $A+D$ are isospectral. The other fibers are obtained from this one upon replacing $A$ by a diagonal shift. Our main theorem gives an intrinsic characterization of the ``spectrally rigid'' matrices $A$ for which the central fiber is a \emph{singleton}, that is, when there are no diagonal shifts $D \neq 0 $ preserving the spectrum of $A$.

    \begin{theorem}\label{thm:IntroMain}
        Fix an $n\times n$ matrix $A$ with entries in an algebraically closed field of characteristic $0$ or $>n$. There exists a nonzero diagonal matrix $D$ such that $A$ and $A+D$ have the same spectrum if and only if for some $k$, the $k \times k$ principal minors of $A$ are not all equal.
    \end{theorem}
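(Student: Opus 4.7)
The plan is to analyze the scheme-theoretic fiber $F_A^{-1}(\sigma(A)) \subset \operatorname{Diag}_n$. By the flatness of $F_A$ recalled above, this fiber has total length $n!$ and always contains $D=0$, so the theorem amounts to asking when it is set-theoretically $\{0\}$, equivalently when the length at the origin equals $n!$. The first step is to write down the defining equations. Applying multilinearity of $\det(\lambda I - A - D)$ in the entries of the diagonal matrix $-D$ yields
\[
\chi_{A+D}(\lambda) \;=\; \sum_{V \subseteq [n]} \chi_{A_V}(\lambda) \prod_{j \notin V}(-d_j),
\]
where $A_V$ is the principal submatrix of $A$ on indices $V$. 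Matching the coefficient of $\lambda^{n-k}$ on both sides of $\chi_{A+D}(\lambda) = \chi_A(\lambda)$ for $k=1,\dots,n$ gives the system
\[
R_k(d) \;=\; \sum_{m=1}^{k}\sum_{|U|=m} P_{k-m}(A_{[n]\setminus U}) \prod_{j\in U} d_j \;=\; 0,
\]
where $P_l(B)$ denotes the sum of $l\times l$ principal minors of $B$. Crucially, the top-degree-in-$d$ part of $R_k$ is the elementary symmetric polynomial $e_k(d)$, and these have no common zero in $\mathbb{P}^{n-1}$; projective Bezout applied to the system of degrees $1,2,\dots,n$ recovers $\prod_k k = n!$ affine solutions counted with multiplicity, matching the flatness count.

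With these equations in hand, the direction \emph{``all $k$-principal minors of $A$ equal at every size $\Rightarrow$ only $D=0$''} is clean. Under this hypothesis, $\det(A_V) = \mu_{|V|}$ depends only on $|V|$, hence $P_l(A_V) = \binom{|V|}{l}\mu_l$; substitution produces
\[
R_k(d) \;=\; \sum_{m=1}^{k} \binom{n-m}{k-m}\,\mu_{k-m}\, e_m(d),
\]
a triangular linear combination of $e_1,\dots,e_k$ with coefficient $1$ on $e_k$. Hence the ideal $(R_1,\dots,R_n)$ coincides with the coinvariant ideal $(e_1,\dots,e_n)$, whose quotient is the $\Sigma_n$-coinvariant algebra: a local Artinian ring of length exactly $n!$ supported at the origin. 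This forces the fiber to be $\{0\}$, settling this direction.

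The converse is the hard direction. Assuming $A$ is rigid, the subscheme $Z_A := V(R_1,\dots,R_n) \subset \AA^n$ is supported at $0$ with scheme-theoretic length $n!$, and so determines a $K$-point of $\Hilb^{n!}(\AA^n)$ lying in the fiber of the Hilbert--Chow morphism $\HC:\Hilb^{n!}(\AA^n) \to \Sym^{n!}(\AA^n)$ over $n!\cdot[0]$, i.e., inside the punctual Hilbert scheme $\Hilb^{n!}_0(\AA^n)$. This same punctual scheme contains the coinvariant point $V(e_1,\dots,e_n)$ from the easy direction. My plan is a tangent-space analysis inside $\Hilb^{n!}_0(\AA^n)$ at the coinvariant point, viewing each $R_k = e_k + (\text{lower-degree correction})$ as an infinitesimal perturbation of $e_k$ and asking which such perturbations preserve set-theoretic concentration at the origin. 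The main obstacle is to match these admissible deformations with the combinatorial structure of the $R_k$'s, showing inductively in $k$ that the degree-$m$ part of $R_k$ must be a scalar multiple of $e_m$ for each $m<k$. Unwinding the formula for $R_k$, this amounts to the assertion that $P_{k-m}(A_{[n]\setminus U})$ depends only on $|U|=m$ for all $k,m$, and specializing $k=m$ forces $\det(A_V)$ to be constant on subsets of each fixed size, which is precisely the desired equality of principal minors. This infinitesimal matching inside the punctual Hilbert scheme, mediated by the structure of the Hilbert--Chow morphism, is the heart of the argument.
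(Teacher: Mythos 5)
Your setup is correct and closely parallels the paper's: the equations $R_k$ are exactly the paper's spectral invariants $S_k$ (up to sign), the observation that their top-degree parts are $e_1,\dots,e_n$ and hence $\dim_K P/(R_1,\dots,R_n) = n!$ is Lemma~\ref{lemma:flatFamily}, and your "easy direction'' argument --- showing that equal principal minors force $(R_1,\dots,R_n) = (e_1,\dots,e_n)$ via the triangular expansion in the $e_m$ --- is the paper's argument verbatim. You also correctly identify the target of the converse: reduce to showing that $P_{k-m}(A_{[n]\setminus U})$ depends only on $|U|$, i.e.\ that the degree-$m$ part of each $R_k$ is a scalar multiple of $e_m$. (Your parenthetical ``specializing $k=m$'' should read something like ``taking $k-m = n-m$, i.e.\ $k=n$,'' so that the sum $P_{k-m}(A_{[n]\setminus U})$ collapses to a single minor $\det(A_{[n]\setminus U})$; with $k=m$ you only see $P_0 = 1$.)

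However, the hard direction is not actually proved. You write that ``the main obstacle is to match these admissible deformations with the combinatorial structure of the $R_k$'s'' and that ``infinitesimal matching inside the punctual Hilbert scheme, mediated by the structure of the Hilbert--Chow morphism, is the heart of the argument'' --- but this heart is left entirely implicit. Two things go wrong with the proposal as written. First, a pure tangent-space analysis at the coinvariant point $[(e_1,\dots,e_n)]$ is insufficient: the ideal $E_A$ is not an infinitesimal neighbor of $(e_1,\dots,e_n)$, so you need the full one-parameter degeneration $E^H\subset P[T]$ (the paper's Lemma~\ref{lemma:flatFamily}), and extracting the constraint on size-$(d+1)$ minors requires the degree-$(d+1)$ Taylor coefficient of $\HC\circ c$ at $T=0$, not just its derivative. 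The first-order information alone only yields equality of the diagonal entries. Second, and more substantively, the translation from ``$\HC\circ c$ constant'' to the arithmetic of principal minors requires an explicit computation. The paper makes the Hilbert--Chow constraint concrete through the Grothendieck--Deligne norm map: constancy forces $\Tr(\mathrm{mult}_u) = 0$ for every homogeneous $u$, yielding polynomial relations $R^u = 0$ in the perturbation coefficients $a^{(i)}_J$; choosing $u = v_1\cdots v_m$ and determining the exact coefficients $\lambda_{k,j} = (-1)^{j+1} m\,(m+k-j-1)!\,(n-m-k+j)!$ (the paper's Proposition~\ref{prop:relations}, whose proof involves the recursion $\lambda_{k,j} = -\lambda_{k-1,j-1}$, a binomial identity, and an Artin-reduction lemma) is what makes the relation $R^u_{\mathrm{top}}(E) = 0$ collapse to
\[
m!\,(n-m)!\sum_{|N|=m}\det(A_N) \;=\; n!\,\det(A_{[m]}),
\]
which, by $\Sigma_n$-equivariance, gives equality of all $m\times m$ principal minors. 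None of this is present in the proposal. So the easy direction and the reduction are right, but the converse --- the genuine content of the theorem --- remains a statement of intent rather than a proof.
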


    \noindent Theorem \ref{thm:IntroMain} identifies the set of spectrally rigid matrices with the so-called \textit{variety of symmetrized principal minors} consisting of $n\times n$ matrices whose principal minors of equal size have the same value. This variety has been studied and described by Huang--Oeding \cite{huang2017symmetrization} in connection to the famous principal minor assignment problem \cites{al2024characterizing, holtz2007hyperdeterminantal,grinshpan2013norm}. For instance, a triangular matrix has symmetrized principal minors if and only if all of its diagonal entries are equal. In this case, Theorem \ref{thm:IntroMain} can be verified by direct computation.
    
    When $A$ is the adjacency matrix of a weighted simple graph $G$, Theorem \ref{thm:IntroMain} becomes a new result in spectral graph theory.

    \begin{corollary}\label{cor: adjacency matrix}
    Suppose that $A$ is the adjacency matrix of a weighted simple graph $G$. There exists a nonzero diagonal matrix $D$ such that $A$ and $A+D$ have the same spectrum if $G$ is neither edgeless nor complete with all edge weights equal up to sign. 
\end{corollary}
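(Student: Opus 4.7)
The plan is to reduce the corollary directly to the $k=2$ case of Theorem \ref{thm:IntroMain} by analyzing when the $2 \times 2$ principal minors of an adjacency matrix can all be equal. Because $G$ is a simple weighted graph, the adjacency matrix $A=(a_{ij})$ is symmetric with zero diagonal; so every $1 \times 1$ principal minor of $A$ vanishes, and the $2 \times 2$ principal minor indexed by $\{i,j\}$ equals $-a_{ij}^2$. Hence all $2 \times 2$ principal minors of $A$ coincide if and only if the squares $a_{ij}^2$ take a single value as $\{i,j\}$ ranges over the unordered pairs with $i \neq j$.

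Next, we distinguish two cases for this common value. If it is zero, then $a_{ij}=0$ for every pair, so $G$ is edgeless. If it equals some $w^2$ with $w \neq 0$, then each $a_{ij} \in \{w,-w\}$ is nonzero, which forces $G$ to be complete and every edge weight to be equal to $\pm w$. Under the hypothesis of the corollary, neither case occurs, so the $2 \times 2$ principal minors of $A$ are not all equal. Theorem \ref{thm:IntroMain} then produces a nonzero diagonal matrix $D$ such that $A$ and $A+D$ are isospectral.

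There is no substantive obstacle here: the corollary is essentially a graph-theoretic restatement of the $k=2$ slice of Theorem \ref{thm:IntroMain}, and the $k=1$ condition is automatic for simple graphs since their adjacency matrices have vanishing diagonal. The only standing point to verify is the applicability of Theorem \ref{thm:IntroMain}, which requires the ground field to be algebraically closed of characteristic $0$ or greater than $n$; this is the standing assumption of the paper.
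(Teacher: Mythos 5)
Your proof is correct and takes essentially the same approach as the paper, which gives only the one-line justification that the corollary follows because all $2 \times 2$ principal minors must be equal. You simply spell out the elementary computation that the paper leaves implicit: for a symmetric matrix with zero diagonal, the $2 \times 2$ principal minor on $\{i,j\}$ is $-a_{ij}^2$, and forcing these to coincide is precisely the edgeless-or-complete-with-uniform-$\pm w$ dichotomy.
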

    \noindent One motivation for Corollary \ref{cor: adjacency matrix} comes from the spectral theory of discrete periodic Schr\"{o}dinger operators. Let $\Delta$ denote the discrete Laplacian on $\ZZ^d$ and let $V: \ZZ^d \to \CC$ be a $\Lambda$-periodic potential, where 
    \begin{equation*}
        \Lambda = q_1\ZZ \oplus \cdots \oplus q_d \ZZ, \qquad q_j \in \ZZ_{>0}.
    \end{equation*}
    
    The operator $\Delta+V$ acts on $\ell^2(\ZZ^d)$ and serves as a standard tight–binding model for electron dynamics in crystalline solids and related materials; see the survey~\cite{ksurvey}. Floquet theory identifies the spectrum of $\Delta+V$ with the union, over $z\in (\TT)^d$, of the spectra of the finite matrices $L_V(z)$ of size $q = \prod_{j=1}^d q_j$ obtained by imposing Floquet boundary conditions, where $\TT$ denotes the complex unit circle. Two such operators $\Delta+V$ and $\Delta+V'$ are said to be \emph{Floquet isospectral} if $\sigma(L_V(z))=\sigma(L_{V'}(z))$ for all $z\in (\TT)^d$, or, equivalently, if they have the same band structure.

    It is a classical rigidity theorem that there are no nonzero \emph{real} potentials Floquet isospectral to the zero potential $\mathbf{0}$, in either the discrete or continuous setting; see, e.g., Borg~\cite{borg} and Kuchment~\cite{kapiii}. In contrast, for complex potentials there are many nontrivial examples isospectral to~$\mathbf{0}$ in the continuous case~\cite{GU}. For discrete periodic operators, on the other hand, such examples have been constructed only in special situations~\cites{kapi,VPapan,flmrp}. By letting $A$ be the discrete Laplacian and $D$ be a periodic multiplication operator, Theorem \ref{thm:IntroMain} yields the following classification result in the discrete case.
    
\begin{theorem} \label{Cor:FloquetMain} 
    Let $\Lambda=q_1\mathbb{Z}\oplus q_2 \mathbb{Z}\oplus\cdots\oplus q_d\mathbb{Z}$  with $q_j\in\ZZ_{>0}$. If at least one of the periods $q_j$ is greater than $3$, then there exists a nonzero $\Lambda$–periodic complex potential $V$ Floquet isospectral to $\mathbf{0}$. If all $q_j\leq 3$ and at most two of the $q_j$ are equal to $3$, then the only $\Lambda$–periodic potential Floquet isospectral to $\mathbf{0}$ is $V\equiv \mathbf{0}$.
\end{theorem}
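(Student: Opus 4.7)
The plan is to apply Theorem~\ref{thm:IntroMain} pointwise in the Floquet parameter to the family $\{L_\Delta(z)\}_{z\in\TT^d}$. Identifying a $\Lambda$-periodic potential $V$ with the diagonal matrix $D_V$ of its values on a fundamental domain of $\Lambda$, the potential $V$ is Floquet isospectral to $\mathbf 0$ if and only if $\sigma(L_\Delta(z) + D_V) = \sigma(L_\Delta(z))$ for every $z \in \TT^d$. Hence Theorem~\ref{thm:IntroMain} at a single $z_0$ already forces $V = 0$ provided every $k \times k$ principal minor of $L_\Delta(z_0)$ takes the same value, for each $k$.

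For the rigidity part, I would first dispose of the directions with $q_j = 2$ by specializing $z_j = -1$: the direction-$j$ off-diagonal entries of $L_\Delta(z)$ all have the form $1 + z_j^{\pm 1}$, so they vanish, and $L_\Delta(z)$ block-decomposes into copies of the Floquet Laplacian supported on the $q_j \in \{1,3\}$ directions. The hypothesis ``at most two of the $q_j$ are equal to $3$'' keeps each residual block of size at most $9$. If only one residual direction has $q_j = 3$, every $k \times k$ principal minor of the $3 \times 3$ residual coincides at every $z_j$ (direct check, using that the period-$3$ cycle is a triangle), and Theorem~\ref{thm:IntroMain} closes the case. If two residual directions have $q_j = 3$, the $9 \times 9$ residual is never pointwise rigid --- its size-$2$ principal minors always take two distinct values (e.g.\ $15$ for adjacent pairs, $16$ for non-adjacent pairs) --- and one must instead exploit the full Laurent-polynomial identities $e_k(L_\Delta(z) + D_V) = e_k(L_\Delta(z))$ in $\CC[z_1^{\pm 1}, z_2^{\pm 1}]$ for each $k$, together with the symmetries of the Cayley graph $K_3 \,\square\, K_3$, to conclude $D_V = 0$.

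For the existence part, assume without loss of generality $q_1 > 3$, and construct $V$ depending only on the first coordinate by lifting a one-dimensional Floquet isospectral potential. Fourier decomposition in the transverse directions block-diagonalizes $L_\Delta(z) + D_V$ into direct sums of 1D operators $L_\Delta^{1\mathrm D}(z_1) + D_{V_1} + c_{\mathbf k} I$, so $d$-dimensional Floquet isospectrality with $\mathbf 0$ reduces to the 1D case of period $q_1$. For $q_1 \geq 4$, the size-$2$ principal minors of the tridiagonal circulant $L_\Delta^{1\mathrm D}(z)$ are not all equal --- adjacent pairs give $3$ while pairs at distance $\geq 2$ give $4$ --- so Theorem~\ref{thm:IntroMain} produces a nonzero spectrum-preserving diagonal perturbation at each fixed $z$. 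Translating Floquet isospectrality into polynomial equations in the entries of $D$ (with coefficients in $\CC[z^{\pm 1}]$) and solving by an ansatz modeled on $q_1$-th roots of unity yields a uniform nonzero solution; for instance, $D = (1+i,\,-1-i,\,-1+i,\,1-i)$ works for $q_1 = 4$, as verified by direct computation of the symmetric functions $e_1,\ldots,e_4$.

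The main obstacle is the rigidity argument when two $q_j$ equal $3$: since $L_\Delta(z)$ is then never pointwise rigid, Theorem~\ref{thm:IntroMain} applied at a single $z$ is insufficient, and the argument must extract and combine the constraints from every monomial in the Laurent expansion of $e_k(L_\Delta(z) + D_V) - e_k(L_\Delta(z))$. Controlling this polynomial system --- and thereby explaining the sharpness of the bound ``at most two $q_j = 3$'' --- is the technical crux of the proof.
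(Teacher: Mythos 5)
Your approach --- reduce to low dimension, then apply Theorem~\ref{thm:IntroMain} --- tracks the paper's, but there are two genuine gaps.

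The most serious is in the existence argument. At each fixed $z$, Theorem~\ref{thm:IntroMain} does produce a nonzero diagonal $D(z)$ preserving the spectrum of the one-dimensional Floquet Laplacian for $q_1 \geq 4$, but Floquet isospectrality requires a \emph{single} $D$, independent of $z$, and pointwise existence of $D(z)$ does not yield this. Your remedy --- a roots-of-unity ansatz --- is only verified for $q_1 = 4$ and is not proved for general $q_1$. The paper closes this by a structural observation you do not make: the characteristic polynomial of the 1D circulant $L_V(z)$ has the form $f(\lambda, V) + (-1)^{q-1}(z + z^{-1})$, so its $z$-dependence is entirely $V$-independent. Hence the spectral invariants do not involve $z$ at all, and Floquet isospectrality of $V$ with $\bfz$ is equivalent to the purely matrix-theoretic statement that $M + D_V$ and $M$ are isospectral, where $M = L_{\bfz}(1)$ is the adjacency matrix of the $q$-cycle. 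Corollary~\ref{cor: adjacency matrix} then gives a $z$-uniform $D$ for every $q \geq 4$ in one stroke, with no ansatz needed.

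A second gap is in the $q_j = 2$ reduction. Setting $z_j = -1$ does block-diagonalize $L_V(z)$ into $L_{\tilde{V}_1} \oplus L_{\tilde{V}_2}$, but the two blocks carry \emph{different} restrictions of $V$, and the resulting identity at $z_j = -1$ is only the product equation $\det(L_{\tilde{V}_1}-\lambda I)\,\det(L_{\tilde{V}_2}-\lambda I) = \det(L_{\tilde{\bfz}}-\lambda I)^2$. To conclude that each factor separately equals $\det(L_{\tilde{\bfz}}-\lambda I)$, so that $\tilde{V}_1$ and $\tilde{V}_2$ are each Floquet isospectral to $\bfz$, the paper invokes irreducibility of the lower-dimensional Bloch variety, a nontrivial ingredient your proposal omits entirely. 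You also correctly flag that the $(3,3)$ case does not yield to a pointwise application of Theorem~\ref{thm:IntroMain}; the paper resolves it by a Macaulay2 computation rather than by the graph-symmetry analysis you sketch, which in your proposal remains unrealized.
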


\noindent This is a new phenomenon in the discrete setting: unlike in the continuous setting, complex periodic potentials remain rigid for sufficiently small periods and fail for complex potentials whose largest period exceeds $3$.

We now briefly outline the proof of Theorem \ref{thm:IntroMain}. For $A$ and $A+D$ to be isospectral, they must have the same  characteristic polynomial. Equivalently, all the coefficients $S_1,\dots, S_n$ of the polynomial $\det(A-\lambda I) - \det(A+D-\lambda I)$, which we call the \textit{spectral invariants} of $A$, must vanish. The common zero locus of the ideal
\[
    E_A \coloneqq (S_1,\dots,S_n)\subset k[v_1,\dots,v_n],
\]
known as the \emph{ideal of spectral invariants}, is precisely the set of diagonal matrices $D$ such that $A+D$ is isospectral to $A$. After expanding out $S_i$ in terms of principal minors, one direction of Theorem~\ref{thm:IntroMain} is relatively direct: if all $k \times k$ principal minors of $A$ agree for every $k$, then we show that $E_A = (e_1,\dots,e_n)$, where $e_i$ denotes the $i$-th elementary symmetric polynomial in the $v_j$. In particular, its vanishing set is $\mathcal{V}(E_A)=\{0\}$.

For the converse direction, we consider the degeneration of $E_A$ to its associated graded ideal, giving a flat family parametrized by $\AA^1$, and study its image under the Hilbert–Chow morphism
\[
    \HC\colon \Hilb_{n!}(\AA^n)\longrightarrow \Sym_{n!}(\AA^n),
\]
encoding how the solutions of the polynomial system $E_A$ move in the degeneration. If there were no nontrivial diagonal shifts $D\neq 0$ preserving the spectrum of $A$, the Hilbert–Chow image of this family would have to remain constant. We use the Grothendieck--Deligne norm map description of $\HC$ in combination with the  combinatorics of the coinvariant algebra $k[v_1,\dots,v_n]/(e_1,\dots,e_n)$ and some information on what we call the Artin reduction of monomials, to show that this is impossible if $A$ lies outside the symmetrized-principal-minor locus.

This article is organized as follows. In Section~\ref{SEC: background}, we provide the necessary background on the coinvariant algebra for the symmetric group, Hilbert schemes of points and the functorial description of the Hilbert--Chow morphism via the Grothendieck--Deligne norm map. Section~\ref{SEC:MainResult} develops how the latter gives a practical tool for establishing the existence of nonzero solutions for the ideals $E_A$ which leads to the proof of Theorem~\ref{thm:IntroMain} and Corollary~\ref{cor: adjacency matrix}. The application to the theory of Floquet isospectrality for discrete periodic Schr\"odinger operators, especially the proof of Theorem~\ref{Cor:FloquetMain}, is given in the final Section~\ref{SEC:Floquet Isospectrality}.

\section{Background and Notation}\label{SEC: background}
\renewcommand*{\thetheorem}{\thesection.\arabic{theorem}}
Let $K$ be a field of characteristic $0$ or $>n$ and let $P = K[v_1,\dots, v_n]$ be a polynomial ring with $n$ variables.

\subsection{Spectral invariants}
Let $A$ be a $n\times n$ matrix over $K$ and define $D\coloneqq \operatorname{diag}(v_1,\dots, v_n)$. Consider the characteristic polynomial of $A+D$ with coefficients in $P$:
\begin{equation}
    \det(A+D-\lambda I) = \sum_{i=0}^n C_i(v)(-\lambda)^{n-i},
\end{equation}
where each $C_i(v)\in P$ is homogeneous of degree $i$ in the $v_j$. If $A$ and $A+D$ are isospectral, then their characteristic polynomials coincide. This motivates the following definition.

\begin{defn}\label{defn:spectralInvariants}
    The \emph{spectral invariants} of $A$ are the polynomials 
    \begin{equation*}
        S_i \coloneqq C_i(v) - C_i(0) \in P, \qquad i = 1, \ldots, n,
    \end{equation*}
    Equivalently, $S_i$ is the $i$-th coefficient of $\det(A-\lambda I) - \det(A+D-\lambda I)$. The ideal
    \begin{equation*}
        (S_1,\dots, S_n) \subset P
    \end{equation*}
    is called the \emph{ideal of spectral invariants} of $A$.
\end{defn}

Let $A_N$ be the principal submatrix of $A$ with respect to the rows and columns indexed by $N \subseteq [n]$. The following formula for the spectral invariants $S_i$ is an easy consequence of the Leibniz expansion for determinants.

\begin{lemma}\label{lem:SpectralInvariants}
For all $1 \leq i \leq n$, we have
    \begin{equation*}
        S_i = \sum_{\substack{J \subseteq [n] \\ 1 \leq |J| \leq i}} \left( \sum_{\substack{N \subseteq [n] \setminus J \\ |N| = i - |J|}} \det(A_N) \right) v^J,
    \end{equation*}
    where $v^J \coloneqq \prod_{j \in J} v_j$.
\end{lemma}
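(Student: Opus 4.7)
The plan is to apply the standard expansion of $\det(A+D-\lambda I)$ in terms of principal minors, and then expand each principal minor of $A+D$ by multilinearity in the rows to peel off the contribution of $D$.

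First I would recall the basic identity for the characteristic polynomial of an arbitrary square matrix $B$: its coefficient of $(-\lambda)^{n-i}$ is $\sum_{|M|=i}\det(B_M)$, summed over principal submatrices of size $i$. Applying this to $B=A+D$ gives
\[
    C_i(v) \;=\; \sum_{\substack{M\subseteq[n]\\|M|=i}} \det\bigl((A+D)_M\bigr).
\]
Next, for each fixed $M\subseteq[n]$, I would use multilinearity of the determinant in the rows of $(A+D)_M = A_M + \operatorname{diag}(v_j)_{j\in M}$: splitting row $j$ (for $j\in M$) as the $j$-th row of $A_M$ plus $v_j e_j$, one obtains
\[
    \det\bigl((A+D)_M\bigr) \;=\; \sum_{J\subseteq M} v^J\det(A_{M\setminus J}),
\]
since fixing the rows indexed by $J$ to be $v_j e_j$ forces the nonzero contribution to come from the Laplace expansion along those rows, leaving the principal minor on $M\setminus J$.

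Then I would substitute and swap the order of summation: for each subset $J\subseteq[n]$ with $|J|\le i$, the sets $M\supseteq J$ with $|M|=i$ are in bijection with subsets $N\coloneqq M\setminus J\subseteq[n]\setminus J$ of size $i-|J|$. This yields
\[
    C_i(v) \;=\; \sum_{\substack{J\subseteq[n]\\ 0\le|J|\le i}} \Biggl(\sum_{\substack{N\subseteq[n]\setminus J\\ |N|=i-|J|}} \det(A_N)\Biggr) v^J.
\]
Finally I would observe that the $J=\emptyset$ term is precisely $C_i(0) = \sum_{|N|=i}\det(A_N)$, so subtracting it yields the claimed formula for $S_i$.

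There is no real obstacle here; the argument is a routine bookkeeping exercise combining the well-known principal-minor expansion of the characteristic polynomial with multilinearity of the determinant. The only point to be careful about is that the expansion of $\det(A_M+\operatorname{diag}(v_j)_{j\in M})$ only produces principal minors of $A$ (not arbitrary minors), which follows from the fact that selecting the row $v_j e_j$ in the multilinear expansion also kills all off-diagonal entries in column $j$.
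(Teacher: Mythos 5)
Your proof is correct. The paper gives no detailed argument for Lemma~\ref{lem:SpectralInvariants}, stating only that it is ``an easy consequence of the Leibniz expansion for determinants''; your derivation---via the principal-minor expansion of the characteristic polynomial followed by multilinearity in the rows of $(A+D)_M$---is a standard repackaging of the same elementary computation and arrives at the identical formula.
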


\subsection{The coinvariant algebra}
Let $e_1, \ldots, e_n \in P$ be the elementary symmetric polynomials, i.e., $e_i$ is the sum of all square-free monomials of degree $i$ in $v_1, \ldots, v_n$. 

\begin{defn}
The \emph{complete homogeneous symmetric polynomial} $H_i(v_1,\dots,v_j)$ is the sum of all monomials of total degree $i$ in the variables $v_1,\dots,v_j$.
\end{defn}

The elementary symmetric polynomials and the complete homogeneous symmetric polynomials are related by the following polynomial identity.

\begin{lemma}\label{lemma:CompleteHomogeneous}
    For every $1 \leq j \leq n$ we have the relation
    \begin{equation*}
        \sum_{i = 0}^{n-j+1} (-1)^i H_i(v_1, \ldots, v_j) e_{n-j+1-i}(v_1, \ldots, v_n) = 0.
    \end{equation*}
\end{lemma}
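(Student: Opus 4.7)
The identity is reminiscent of the classical Newton-type relations between elementary and complete symmetric functions, so my plan is to prove it by a generating-function argument. I would introduce the two formal power series
\[
E(t) = \prod_{k=1}^{n}(1 + v_k t) = \sum_{i \geq 0} e_i(v_1,\dots,v_n)\, t^i, \qquad G_j(t) = \prod_{k=1}^{j} \frac{1}{1 - v_k t} = \sum_{i \geq 0} H_i(v_1,\dots,v_j)\, t^i,
\]
and consider the product $E(-t)\,G_j(t)$.

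The key point is that this product telescopes: substituting $t \mapsto -t$ gives $E(-t) = \prod_{k=1}^{n}(1 - v_k t)$, and the first $j$ factors cancel against the denominators of $G_j(t)$, leaving
\[
E(-t)\,G_j(t) = \prod_{k=j+1}^{n}(1 - v_k t),
\]
which is a polynomial in $t$ of degree $n-j$. In particular, its coefficient of $t^{n-j+1}$ is zero. Extracting this coefficient from the left-hand side via Cauchy convolution yields
\[
\sum_{i=0}^{n-j+1}(-1)^{n-j+1-i}\, e_{n-j+1-i}(v_1,\dots,v_n)\, H_{i}(v_1,\dots,v_j) = 0,
\]
and multiplying through by $(-1)^{n-j+1}$ produces precisely the identity in the lemma.

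The argument is essentially formal, and I do not anticipate a genuine obstacle. The only point requiring care is bookkeeping for the sign convention after the substitution $t \mapsto -t$ and the reindexing $i \leftrightarrow n-j+1-i$ of the resulting convolution sum. A mildly cleaner variant is to instead expand $E(t)\,G_j(-t)$ at its coefficient of $t^{n-j+1}$, which produces the signs in the stated form without any final normalization.
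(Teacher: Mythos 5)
Your proof is correct and is essentially the same generating-function argument as the paper's: both multiply the elementary-symmetric series by the (sign-twisted) complete-homogeneous series in $v_1,\dots,v_j$, observe that the product collapses to a polynomial of degree $n-j$, and read off the vanishing coefficient of $t^{n-j+1}$. The only cosmetic difference is that you substitute $t\mapsto -t$ in $E$ and then normalize the sign at the end, whereas the paper substitutes in $G_j$ and gets the stated signs immediately — the ``mildly cleaner variant'' you mention at the end is in fact exactly what the paper does.
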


\begin{proof}
    The generating series for the complete homogeneous symmetric polynomials in $v_1, \ldots, v_j$ is
    \begin{equation*}
        \sum_{i \geq 0} H_i(v_1, \ldots, v_j) t^i = \frac{1}{(1 - v_1 t) \cdots (1 - v_j t)},
    \end{equation*}
    hence
    \begin{equation*}
         \sum_{i \geq 0} (-1)^i H_i(v_1, \ldots, v_j) t^i = \frac{1}{(1 + v_1 t) \cdots (1 + v_j t)}.
    \end{equation*}
    On the other hand, the generating series for the elementary symmetric functions in $v_1, \ldots, v_n$ is
    \begin{equation*}
        \sum_{i = 0}^n e_{i}(v_1, \ldots, v_n) t^{i} = (1 + v_1 t) \cdots (1 + v_n t).
    \end{equation*}
    The coefficient of $t^{n-j+1}$ of the product of the latter two generating series is then equal to the sum in the statement of the lemma. On the other hand, the product of these two series is $(1 + v_{j+1} t) \cdots (1 + v_n t)$ (to be interpreted as $1$ for $j = n$). Expanding this product, the highest possible exponent of $t$ is $n-j$, so the coefficient of $t^{n-j+1}$ equals zero.
\end{proof}

\begin{corollary}\label{cor:v1^q}
   $v_1^n = \sum_{i = 1}^n (-1)^{i-1} v_1^{n-i} e_i$.
\end{corollary}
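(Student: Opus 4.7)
The plan is to obtain the identity as a direct specialization of Lemma \ref{lemma:CompleteHomogeneous} at $j = 1$. Since $H_i(v_1)$ is the sum of all monomials of total degree $i$ in the single variable $v_1$, we simply have $H_i(v_1) = v_1^i$. Substituting this into the lemma (with $j = 1$, so that $n - j + 1 = n$) yields
\[
    \sum_{i=0}^{n} (-1)^i v_1^i \, e_{n-i}(v_1,\dots,v_n) = 0.
\]

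Next I would reindex the sum by setting $k = n - i$, turning the relation into $\sum_{k=0}^{n} (-1)^{n-k} v_1^{n-k} e_k = 0$. Multiplying through by $(-1)^n$ and using $e_0 = 1$ to isolate the $k = 0$ term gives
\[
    v_1^n + \sum_{k=1}^{n} (-1)^k v_1^{n-k} e_k = 0,
\]
from which the claim $v_1^n = \sum_{k=1}^{n} (-1)^{k-1} v_1^{n-k} e_k$ follows immediately after moving the sum to the other side.

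There is no genuine obstacle here: the only subtlety is keeping the signs and the indexing straight after the substitution. The corollary is essentially a restatement of the classical Newton-type recursion in the special case of a single variable, recovered from the general relation by taking $j = 1$.
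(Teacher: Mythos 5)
Your proof is correct and is exactly the paper's approach: the paper proves the corollary by applying Lemma~\ref{lemma:CompleteHomogeneous} with $j=1$, and your argument carries out that specialization in detail with the same reindexing and sign bookkeeping.
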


\begin{proof}
    Apply Lemma~\ref{lemma:CompleteHomogeneous} with $j = 1$.
\end{proof}

A direct consequence of Lemma~\ref{lemma:CompleteHomogeneous} is the equality of ideals
\begin{equation*}
     (e_1, \ldots, e_n) = (H_1(v_1, \ldots, v_n), \ldots, H_{n-1}(v_1,v_2), H_n(v_1)) \subseteq P.
\end{equation*}
Moreover, the $H_i(v_1, \ldots, v_{n-i+1})$ are readily seen to form a Gröbner basis for the lexicographic term order for which $v_1 < v_2 < \cdots < v_n$ with corresponding initial ideal $(v_1^n, v_2^{n-1}, \ldots, v_n)$. It is a consequence of this that the so-called \emph{Artin monomials} $v_1^{l_1} \cdots v_n^{l_n}$ with $0 \leq l_i \leq n-i$ for all $i = 1, \ldots, n$ form a graded $K$-basis for the \emph{coinvariant algebra} $P/(e_1, \ldots, e_n)$.

\begin{defn}
    Let $p \in P$ be any polynomial. The \emph{Artin reduction} of $p$ is the unique linear combination of Artin monomials $p' \in P$ such that $\overline{p} = \overline{p'}$ in $P/(e_1, \ldots, e_n)$.
\end{defn}

The size of the exponents appearing in terms of the Artin reduction of a monomial $p\in P$ are in general hard to predict. However, the following can be said for the exponent of $v_1$.

\begin{lemma}\label{lemma:ArtinMonomialReduction}
    Let $p \in P$ be a monomial and write $p = v_1^r v$ for a monomial $v$ not divisible by $v_1$. Then the exponent of $v_1$ in every Artin monomial occurring in the Artin reduction $p'$ of $p$ is $\geq r$.
\end{lemma}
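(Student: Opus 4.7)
The plan is to trace the Artin reduction of $p$ through the Gröbner basis division algorithm and track the exponent of $v_1$ in each intermediate monomial. Recall from the discussion preceding the lemma that $\{H_{n-j+1}(v_1,\ldots,v_j)\}_{j=1}^n$ is a Gröbner basis for $(e_1,\ldots,e_n)$ with respect to the lexicographic order $v_1 < v_2 < \cdots < v_n$, and that the leading term of $H_{n-j+1}(v_1,\ldots,v_j)$ is $v_j^{n-j+1}$. The Artin reduction $p'$ is the unique normal form of $p$ computed by iterated division with respect to this Gröbner basis.

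The crucial observation is that for each index $j$, the polynomial $H_{n-j+1}(v_1,\ldots,v_j) - v_j^{n-j+1}$ is a $K$-linear combination of monomials in the variables $v_1,\ldots,v_j$ only, so every such monomial has $v_1$-exponent $\geq 0$. I would prove by induction on the number of reduction steps that, starting from $p$, every monomial appearing in any intermediate polynomial has $v_1$-exponent $\geq r$. The base case is immediate since $p = v_1^r v$. For the inductive step, suppose a monomial $m = v_1^a v_2^{a_2}\cdots v_n^{a_n}$ with $a \geq r$ is reduced using the generator with leading term $v_j^{n-j+1}$ for some $j \geq 2$. Then $m$ is replaced by $-(m/v_j^{n-j+1})\cdot(H_{n-j+1}(v_1,\ldots,v_j) - v_j^{n-j+1})$. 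The cofactor $m / v_j^{n-j+1}$ still has $v_1$-exponent $a$, and multiplication by a polynomial in $v_1,\ldots,v_j$ preserves the lower bound $a \geq r$ on the $v_1$-exponent of every monomial of the product. In the boundary case $j = 1$, the generator $H_n(v_1) = v_1^n$ equals its own leading term, so the reduction simply erases $m$ and the bound is maintained vacuously.

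Since the Artin monomials are exactly those monomials avoiding each leading term $v_j^{n-j+1}$, the division algorithm terminates in a $K$-linear combination of Artin monomials, and by the invariant above each has $v_1$-exponent $\geq r$. There is no real technical obstacle here: the argument is a direct bookkeeping along the division algorithm, and it works precisely because each Gröbner basis element with leading term $v_j^{n-j+1}$ is a polynomial only in the variables $v_1,\ldots,v_j$, so reduction can never delete occurrences of $v_1$ (except in the degenerate $j=1$ case, where the whole monomial vanishes).
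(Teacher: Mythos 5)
Your proof is correct and takes essentially the same approach as the paper's: both track the $v_1$-exponent through the Gröbner reduction by the basis $\{H_{n-j+1}(v_1,\ldots,v_j)\}_j$, relying on the key fact that each basis element involves only the variables $v_1,\ldots,v_j$ so that subtracting the non-leading terms can only increase the $v_1$-exponent (or annihilate the monomial entirely when $j=1$). Your framing as a literal invariant along the multivariate division algorithm is marginally cleaner than the paper's bespoke termination argument, but the underlying mechanism is identical.
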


\begin{proof}
    This can be seen inductively. If $p$ is an Artin monomial, this is trivial. Otherwise, let $i \geq 1$ be maximal such that the exponent $r_i$ of $v_i$ in $p$ satisfies $r_i \geq q-i+1$ and let $p'$ be the Artin reduction of $p$. If $i = 1$, then $p \in (e_1, \ldots, e_q)$ by Corollary~\ref{cor:v1^q}, so $p' = 0$ and the claim is vacuously true. If $i \geq 2$, then $p \equiv p - v_i^{r_i - (q-i+1)} H_{q-i+1}(v_1, \ldots, v_i) \pmod {(e_1, \ldots, e_q)}$ and all monomials in $p - v_i^{r_i - (q-i+1)} H_{q-i+1}(v_1, \ldots, v_i)$ have equal or larger $v_1$-exponents than $p$, strictly lower $v_i$-exponents than $p$, and for $j > i$ the exponents of all the $v_j$ remain the same as in $p$. We may repeat the same procedure with all monomials that occur in $p - v_i^{r_i - (q-i+1)} H_{q-i+1}(v_1, \ldots, v_i)$, discarding those with $v_1$-exponent $\geq q$ as before. In each iteration, for every new monomial either the maximum index $i$ for which the exponent of $v_i$ is $\geq q-i+1$ drops, or $i$ stays the same and the exponent of $v_i$ drops, so this process terminates.
\end{proof}

\subsection{Hilbert schemes of points and the Hilbert--Chow morphism}\label{sec: AG background}
For general background on algebraic geometry and scheme theory we refer to \cite{H}. We briefly introduce Hilbert schemes of points and the Hilbert--Chow morphism in the setting relevant to us here. Much more can be found, e.g., in \cite[Chapters~5-7]{FGA}.

As before, let $P = K[v_1, \ldots, v_n]$ be the polynomial ring. We denote by $\AA^n \coloneqq \Spec(P)$ the affine $n$-space over the algebraically closed field $K$. For an integer $r \geq 1$, the \emph{Hilbert scheme $\Hilb_r(\AA^n)$ of $r$ points on $\AA^n$} parametrizes all zero-dimensional subschemes of $\AA^n$ of length $r$ or, equivalently, all ideals $I \subseteq P$ such that $\dim_K(P/I) = r$. More precisely, the functor
\begin{align*}
    \mathcal{H}ilb_r(\AA^n) : \ &\mathrm{Sch}_{/K}^{\mathrm{op}} \longrightarrow \mathrm{Sets}, \\
    & \quad \ \ T \longmapsto \{Z \subseteq \AA^n \times_K T \text{ closed subscheme } | \ Z \rightarrow T \text{ finite flat of rank } r\}
\end{align*}
is represented by a quasi-projective scheme $\Hilb_r(\AA^n)$, i.e., there are bijective maps $\mathcal{H}ilb_r(\AA^n)(T) \cong \Hom_{\mathrm{Sch}_{/K}}(T, \Hilb_r(\AA^n))$ that are natural in~$T$.

The geometry of $\Hilb_r(\AA^n)$ is subtle in general and is still very actively studied \cites{Iarrobino1972Reducibility, Cartwright2009HilbertSchemeOf8Points, JelisiejewPathologies, JelisiejewGenericallyNonreduced, JelisiejewComponents, Farkas2024Irrational}. Regarding positive results, it is known that $\Hilb_r(\AA^n)$ is connected for all $n$ and $r$~\cite{HartshorneConnectedness}, and that $\Hilb_r(\AA^n)$ is smooth and hence irreducible if $r \leq 3$ \cite[Section~7.2]{FGA} or $n \leq 2$~\cite{Fogarty}.

For the rest of this section we assume that the characteristic of $K$ is zero or does not divide $r$. We will make extensive use of the \emph{Hilbert--Chow morphism}
\begin{equation*}
    \HC: \ \Hilb_r(\AA^n) \longrightarrow \Sym_r(\AA^n).
\end{equation*}
Here, $\Sym_r(\AA^n)$ denotes the $r$-th symmetric power and can be defined as
\begin{equation*}
    (\AA^n)^r/\Sigma_r = \Spec \left( (P^{\otimes r})^{\Sigma_r} \right),
\end{equation*}
where $\Sigma_r$ is the symmetric group permuting the $r$ factors of $(\AA^n)^r$ and $(P^{\otimes r})^{\Sigma_r}$ is the invariant subring of the $r$-th tensor power of $P$. The symmetric power $\Sym_r(\AA^n)$ can also be viewed as parametrizing the effective $0$-cycles of length $r$ on $\AA^n$. As a map of sets, the Hilbert--Chow morphism $\HC$ sends an ideal $I \subseteq P$ to its vanishing $0$-cycle. One perspective on $\HC$ is therefore that it takes a zero-dimensional polynomial system and outputs its solutions, counted with multiplicities.

The formal definition of $\HC$ as a morphism of schemes is somewhat involved, and many of the standard treatments only define it in a slightly restricted setting, e.g., the treatment in \cite[Section~7.1]{FGA} only yields a map $(\Hilb_r(\AA^n))_{\mathrm{red}} \rightarrow \Sym_r(\AA^n)$ from the underlying reduced scheme of $\Hilb_r(\AA^n)$. The Hilbert--Chow morphism $\HC$ that we use here instead refers to the morphism constructed in \cite[Section~4]{EkedahlSkjelnes2014Hilb} in terms of the \emph{Grothendieck--Deligne norm map}.

By our assumption on the characteristic of $K$, the invariant subring $(P^{\otimes r})^{\Sigma_r}$ is naturally isomorphic to the divided power algebra $\Gamma^r P$ \cite[Section~3.3]{EkedahlSkjelnes2014Hilb}. The latter is formally defined as a certain quotient of the polynomial ring $K[\gamma^r(u) : (r,u) \in \ZZ_{\geq 0} \times P]$ by relations that are modeled on informally thinking of the generator $\gamma^r(u)$ as the expression $\frac{1}{r!} u^r$ but without literally having to divide by $r!$ \cite[Section~1.1]{EkedahlSkjelnes2014Hilb}.

We briefly describe the construction of $\HC$ given in \cite[Section~4.2]{EkedahlSkjelnes2014Hilb}. We will only need the affine case, so let $A$ be a $K$-algebra and define $P_A \coloneqq P \otimes_K A$. According to \emph{loc. cit.}, $\HC(\Spec(A))$ sends an ideal $I \subseteq P_A$ such that $P_A/I$ is a locally free $A$-module of rank $r$ to the $K$-algebra homomorphism
\begin{equation*}
    h_I: \Gamma^{r}P \rightarrow A,
\end{equation*}
which is determined by the following requirement: For any polynomial $u \in P$, the generator $\gamma^{r}(u) \in \Gamma^{r}P$ is mapped by $h_I$ to the determinant of the $A$-linear endomorphism
\begin{equation*}
    \mathrm{mult}_u : P_A/I \rightarrow P_A/I, \ p \mapsto u \cdot p.
\end{equation*}
In formulas,
    \begin{equation*}
        h_I(\gamma^{r}(u)) = \det(\mathrm{mult}_u).
    \end{equation*}
    By \cite[Proposition~1.11]{EkedahlSkjelnes2014Hilb}, the trace $\Tr(\mathrm{mult}_u)$ also lies in the image of $h_I$, namely
    \begin{equation}\label{eqn: trace equation}
        \Tr(\mathrm{mult}_u) = h_I(\gamma^1(u) \ast \gamma^{r-1}(1)).
    \end{equation}
    Here, $\ast$ denotes the external multiplication $\Gamma^i P \times \Gamma^{j} P \rightarrow \Gamma^{i+j} P$ for divided power algebras which, in our setting, agrees with the \emph{shuffle product} in the tensor algebra $P^{\otimes i} \times P^{\otimes j} \rightarrow P^{\otimes (i+j)}$ when restricted to the subring of symmetric tensors, see \cite[Section~3.3]{EkedahlSkjelnes2014Hilb}. In fact, \cite[Proposition~1.11]{EkedahlSkjelnes2014Hilb} shows that every coefficient of the characteristic polynomial of $\mult_u$ is in the image of $h_I$.

\section{Proof of Main Result}\label{SEC:MainResult}

\subsection{Perturbations of elementary symmetric polynomials}
The spectral invariants of Definition~\ref{defn:spectralInvariants} are ``perturbations'' of the elementary symmetric polynomials by lower degree polynomials. After some initial experiments, one might suspect that every ideal generated by such perturbed elementary symmetric polynomials has nonzero solutions unless the ideal is actually equal to $(e_1, \ldots, e_n)$ itself. The following example shows that this is not true even for $n = 3$.

\begin{example}\label{ex:counterexamplePerturbation} 
The ideals $I_1 = (e_1, e_2 - v_1 - v_2, e_3 + v_1v_2-v_1-v_2)$, $I_2 = (e_1,e_2 + v_1, e_3 + v_1^2)$, and $I_3 = (e_1, e_2 - v_2, e_3+v_1^2+v_1v_2+v_2)$ all have the property that their vanishing sets consist only of the origin, and none of them agrees with $(e_1,e_2,e_3)$. In fact, using the Hilbert--Chow technique based on Section~~\ref{sec: AG background} and developed in the following subsections, one can show that, apart from $(e_1,e_2,e_3)$ itself, the $\mathbb{G}_{m}$-orbits of  $I_1$, $I_2$ and $I_3$ for the standard scalar multiplication action of $\mathbb{G}_{m}$ on $\AA^3$ give the full set of ideals that only vanish at the origin and are generated by perturbations of elementary symmetric polynomials.
\end{example}

A straightforward consequence of Lemma~\ref{lem:SpectralInvariants} is that the spectral invariants have the elementary symmetric polynomials as leading terms and only involve \emph{square-free} monomials in the $v_i$. This motivates the following more narrow notion of perturbations.

\begin{defn}\label{defn:idealSpectralInvariants}
    A \textit{perturbation} of $e_i$ is a polynomial $g_i\in P$ of degree $<i$ whose monomials are all square-free. We call $e_i+g_i$ a perturbed elementary symmetric polynomial, and the ideal
    \begin{equation*}
        E\coloneqq (e_1+g_1,\dots, e_n+g_n)\subset P
    \end{equation*}
    an \emph{ideal of generalized spectral invariants}.
\end{defn}

We will ultimately specialize the $g_i$ to the spectral invariants of $A$, but for now we treat their coefficients as independent parameters. We seek to classify which ideals of generalized spectral invariants $E$ satisfy $\mathcal{V}(E) = \{0\}$.

For a subset $J \subseteq [n]$ with $|J| \leq i-1$, denote by $a^{(i)}_{J}$ the coefficient of the monomial
\begin{equation*}
    v^J = \prod_{j \in J} v_j
\end{equation*} 
in $g_i$. For a fixed $i$ and $0\leq k \leq i-1$, we write $a^{(i)}_k$ for the set of coefficients $a^{(i)}_J$ with $|J| = k$. When viewing $a^{(i)}_J$ as variables, we assign them the (\emph{auxiliary}) degree
\begin{equation*}
    \deg(a^{(i)}_J) \coloneqq i - |J| > 0.
\end{equation*}
It is convenient to picture the data of the ideal $E$ in a triangular array (Table \ref{table:1}), where column $i$ corresponds to $e_i+g_i$, the $k$-th row from the bottom collects all degree $k$ terms in the variables $v_1,\dots, v_n$, and the antidiagonals correspond to constant values of the auxiliary degree $i-|J|$.

\begin{table}[h]
\centering
\renewcommand{\arraystretch}{1.2}
\begin{tabular}{@
    {}>{\centering\arraybackslash}m{0pt}@{}|
    >{\centering\arraybackslash}m{0.7cm}|
    >{\centering\arraybackslash}m{0.7cm}|
    >{\centering\arraybackslash}m{0.7cm}|
    >{\centering\arraybackslash}m{0.7cm}|
    >{\centering\arraybackslash}m{0.7cm}|}
\hline
\rule{0pt}{0.6cm} & &  &  &  & $e_n$ \\ \hline
\rule{0pt}{0.6cm} & &  &  & $\Ddots$ & $\vdots$ \\ \hline
\rule{0pt}{0.6cm} & &  & $e_3$ & $\cdots$ & $a^{(n)}_3$ \\ \hline
\rule{0pt}{0.6cm} & & $e_2$ & $a^{(3)}_2$ & $\cdots$ & $a^{(n)}_2$ \\ \hline
\rule{0pt}{0.6cm} & $e_1$ & $a^{(2)}_1$ & $a^{(3)}_1$ & $\cdots$ & $a^{(n)}_1$ \\ \hline
\rule{0pt}{0.6cm} & $a^{(1)}_0$ & $a^{(2)}_0$ & $a^{(3)}_0$ & $\cdots$ & $a^{(n)}_0$ \\ \hline
\end{tabular}
\caption{The ideal of generalized spectral invariants}\label{table:1}
\end{table}

We next place these ideals among a well-behaved one-parameter family. Let $T$ be a new variable, and let $E^H \subset P[T]$ be the homogenization of $E$ with respect to the standard grading in which $\deg T = 1$ and the $v_i$ retain degree 1. Concretely, if $g_i = \sum a_J^{(i)} v^J$, then the homogenized generator of degree $i$ is
\begin{equation*}
    e_i + g_i^H, \qquad g_i^H = \sum_{|J| \leq i-1} a^{(i)}_J v^J T^{i-|J|}.
\end{equation*}

\begin{lemma}\label{lemma:flatFamily}
    The ideal $E^H$ defines a finite flat family of subschemes of $\AA^n$ of length $n!$ or, equivalently, a morphism
    \begin{equation*}
        c: \AA^1 = \Spec K[T] \longrightarrow \Hilb_{n!}(\AA^n).
    \end{equation*}
    If $E$ vanishes only at the origin, then the composition $\HC\circ c$ with the Hilbert-Chow morphism is constant.
\end{lemma}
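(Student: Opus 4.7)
The plan is to establish that $B \coloneqq P[T]/E^H$ is a free $K[T]$-module of rank $n!$, with $K[T]$-basis given by the Artin monomials; both claims of the lemma then follow directly.

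First, I would observe that each $g_i^H$ is divisible by $T$ (every monomial has $T$-exponent $i - |J| \geq 1$), so the fiber at $T = 0$ is the coinvariant algebra $B/TB = P/(e_1, \ldots, e_n)$. The Artin monomials form a $K$-basis there, and by graded Nakayama applied to $B$ as a graded $K[T]$-module (with $T$ of positive degree), these same monomials generate $B$ over $K[T]$. In particular, $B$ is finite over $K[T]$, which forces $\dim B \leq 1$, while Krull's Hauptidealsatz applied to the $n$ generators of $E^H$ in the regular ring $P[T]$ of dimension $n+1$ yields $\dim B \geq 1$. Hence $\dim B = 1$, $E^H$ has codimension exactly $n$, and the generators $e_1 + g_1^H, \ldots, e_n + g_n^H$ form a regular sequence in the Cohen--Macaulay ring $P[T]$, making $B$ a complete intersection.

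With the complete intersection in hand, the Koszul resolution computes
\[
    \mathrm{HS}_B(t) = \frac{\prod_{i=1}^n (1 - t^i)}{(1 - t)^{n+1}} = \frac{[n]_t!}{1 - t},
\]
where $[n]_t! = \prod_{k=1}^n (1 + t + \cdots + t^{k-1}) = \sum_L t^{|L|}$, summed over Artin multi-indices $L$. This coincides with the Hilbert series of the graded free $K[T]$-module $\bigoplus_L K[T] \cdot \mathbf{e}_L$ (with $\deg \mathbf{e}_L = |L|$), so the surjection provided by Nakayama must be a graded isomorphism in every degree. Hence $B$ is free of rank $n!$ over $K[T]$, which means that $\Spec B \to \AA^1$ is a finite flat family of length-$n!$ subschemes of $\AA^n$; by representability of the Hilbert functor, this corresponds to a morphism $c \colon \AA^1 \to \Hilb_{n!}(\AA^n)$.

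For the second claim, assume $\mathcal V(E) = \{0\}$ and compute each fiber of $c$. At $T = 0$ the ideal collapses to $(e_1, \ldots, e_n)$, whose vanishing locus is just $\{0\}$. For $t_0 \neq 0$, the substitution $v_i = t_0 w_i$ gives $(e_i + g_i^H)|_{T = t_0} = t_0^i (e_i(w) + g_i(w))$, so the $K$-algebra automorphism $v_i \mapsto t_0 v_i$ of $P$ identifies $E^H|_{T = t_0}$ with $E$; therefore its vanishing locus is also $\{0\}$. Every fiber of $c$ is thus supported at the origin with length $n!$, so $\HC(c(t_0)) = n! \cdot [0] \in \Sym_{n!}(\AA^n)$ for every closed $t_0 \in \AA^1$. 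Since $\AA^1$ is reduced, $\HC \circ c$ factors through this single reduced point and so is constant. The main obstacle is the complete intersection structure of $B$ in the first part of the argument; once it is in place, the Hilbert series comparison, the identification with the free module on Artin monomials, and the fiber computation for the Hilbert--Chow image are essentially routine.
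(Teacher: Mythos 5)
Your proposal is correct. The overall architecture matches the paper's, but the route to freeness is somewhat different and more detailed. The paper notes that the generators of $E$ form a regular sequence in $P$ because their leading forms $e_1,\dots,e_n$ do, computes $\dim_K P/E = n!$, and then appeals to the standard fact about homogenizations of ideals with regular-sequence leading forms to conclude that $P[T]/E^H$ is free of rank $n!$. You instead work directly in $P[T]$: graded Nakayama (valid here since $B = P[T]/E^H$ is nonnegatively graded with finite-dimensional pieces) gives that the Artin monomials generate $B$ over $K[T]$, whence $B$ is finite over $K[T]$ and $\dim B \leq 1$; Krull's height theorem gives $\dim B \geq 1$, so the $n$ generators of $E^H$ form a regular sequence in the Cohen--Macaulay ring $P[T]$; the Koszul resolution then pins down the Hilbert series, which matches that of the graded free $K[T]$-module on the Artin monomials, forcing the Nakayama surjection to be an isomorphism. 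This buys you a self-contained argument that avoids citing the homogenization lemma but at the cost of a dimension-theory detour. For the second claim, your explicit substitution $v_i = t_0 w_i$ identifying $E^H\vert_{T=t_0}$ with $E$, together with the reducedness of $\mathbb{A}^1$, is exactly the $\mathbb{G}_m$-equivariance observation in the paper, just unpacked; both correctly conclude that $\HC\circ c$ factors through a reduced point and is therefore constant as a morphism of schemes, not merely on closed points.
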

\begin{proof}
    The  natural map $K[T] \to P[T] / E^H$ makes $P[T]/E^H$ a finitely generated free $K[T]$-module. Since the top-degree parts of the generators of $E$ are $e_1, \ldots, e_n$, which form a regular sequence, so do the generators of $E$ themselves, and we have
    \begin{equation*}
        \dim_K(P/E) = \dim_K(P/(e_1,\dots, e_n)) = n!.
    \end{equation*}
    It follows that $P[T]/E^H$ is a free $K[T]$-module of rank $n!$, and hence $E^H$ defines a flat family of zero-dimensional subschemes of $\AA^n$ of length $n!$. This yields a morphism
    \begin{equation*}
		c: \AA^1 \rightarrow \Hilb_{n!}(\AA^n), \qquad t \mapsto [E^H|_{T = t}].
	\end{equation*}
    On $\AA^1\setminus \{0\}$, the family $E^H|_{T=t}$ is obtained from $E$ by the scaling action of $\mathbb{G}_m$ on $\AA^n$, so the image of $c$ is contained in the closure of a $\mathbb{G}_m$-orbit. The Hilbert--Chow morphism
    \begin{equation*}
        \HC:\Hilb_{n!}(\AA^n)\longrightarrow \Sym_{n!}(\AA^n) \coloneqq (\AA^n)^{n!}/\Sigma_{n!}
    \end{equation*}
    is $\mathbb{G}_m$-equivariant with respect to the induced scaling action. If $E$ vanishes only at the origin, then so does $E^H|_{T=t}$ for every $t\neq 0$, and thus the corresponding $0$-cycle is always $n!\cdot [0]$. Hence $\HC \circ c$ is constant.
\end{proof}

If $\HC\circ c$ is constant, then in particular its differential (and more generally all coefficients of positive powers in the Taylor expansion) of $\HC \circ c$ at $T=0$ vanish. Via the description of $\HC$ in terms of the Grothendieck--Deligne norm map \cite{EkedahlSkjelnes2014Hilb}, this yields explicit relations among the coefficients $a_J^{(i)}$ of the perturbations $g_i$: For example, for any polynomial $u \in P$, all coefficients of positive powers of $T$ in $\Tr(\mult_u)$ must vanish.

We now make this precise and extract certain leading relations in the $a_J^{(i)}$.

\subsection{Polynomiality and leading terms}
Let $E^H$ be as above, and consider the $K[T]$-linear endomorphism
\[
   \mult_u : P[T]/E^H \longrightarrow P[T]/E^H, \qquad p \longmapsto u \cdot p
\]
for some homogeneous $u\in P$ of degree $d$.

\begin{lemma}\label{lemma:polynomiality}
    Let $u \in P$ be homogeneous of degree $d$, and let $r\ge 1$. Then the $r$-th coefficient of the characteristic polynomial of $\mult_u$ is of the form
    \[
        T^{rd}\, R^{u,r},
    \]
    where $R^{u,r}$ is a homogeneous polynomial of degree $rd$ in the variables $a_J^{(i)}$. In particular, for $r=1$ we have
    \[
        \Tr(\mult_u) = T^d R^{u,1},
    \]
    and we abbreviate $R^u\coloneqq R^{u,1}$.
\end{lemma}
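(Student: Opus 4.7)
The plan is to exploit two compatible graded structures on the universal family. First, I would treat the perturbation coefficients $a_J^{(i)}$ as formal variables and work in $R \coloneqq K[a_J^{(i)}][v_1, \ldots, v_n, T]$, with $E^H$ viewed as an ideal there. The flatness argument of Lemma~\ref{lemma:flatFamily} extends verbatim to show that $R/E^H$ is a free $K[a_J^{(i)}][T]$-module of rank $n!$; combined with graded Nakayama in the grading $\deg v_i = \deg T = 1$, $\deg a_J^{(i)} = 0$, in which $E^H$ is homogeneous and the fiber at $T = 0$ is the coinvariant algebra, one deduces that the Artin monomials $v^l$ with $0 \leq l_i \leq n-i$ form a graded $K[a_J^{(i)}][T]$-basis of $R/E^H$.

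The crucial observation is that the generator $e_i + g_i^H$ is simultaneously homogeneous of degree $i$ in two gradings: the \emph{geometric} grading $\deg v_i = \deg T = 1$, $\deg a_J^{(i)} = 0$, and the \emph{auxiliary} grading $\deg v_i = 1$, $\deg T = 0$, $\deg a_J^{(i)} = i - |J|$. Hence $\mult_u$ is homogeneous of degree $d$ in both gradings. Expanding $u \cdot v^l = \sum_{l'} M_{l', l}\, v^{l'}$ in the Artin basis, homogeneity in the geometric grading forces $M_{l', l}$ to be a single monomial in $T$ with coefficient in $K[a_J^{(i)}]$, namely $M_{l', l} = c_{l', l}\, T^{|l|+d-|l'|}$ (understood as zero when the exponent is negative). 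Homogeneity in the auxiliary grading then forces $c_{l', l}$ to be homogeneous of auxiliary degree $|l|+d-|l'|$ in the $a_J^{(i)}$.

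The $r$-th coefficient of the characteristic polynomial of $\mult_u$ is, up to sign, the sum over $r$-subsets $S = \{l^{(1)}, \ldots, l^{(r)}\}$ of Artin exponents of the principal minors $\det\!\bigl((M_{l^{(i)}, l^{(j)}})_{i,j}\bigr)$. Expanding each such minor as a sum over permutations $\sigma \in \Sigma_r$, the resulting $T$-exponent in the $\sigma$-term equals $\sum_i (|l^{(\sigma(i))}| + d - |l^{(i)}|) = rd$, which is independent of $\sigma$ because $\sum_i |l^{(\sigma(i))}| = \sum_i |l^{(i)}|$. By the same telescoping applied to the auxiliary grading, each term is homogeneous of auxiliary degree $rd$ in the $a_J^{(i)}$. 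Therefore every principal minor factors as $T^{rd}$ times a polynomial in the $a_J^{(i)}$ homogeneous of degree $rd$, and summing over $S$ yields the claimed form $T^{rd} R^{u, r}$. Specializing to $r = 1$ recovers the trace formula.

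The main obstacle I anticipate is the first paragraph: namely, verifying that the Artin monomials really do form a graded $K[a_J^{(i)}][T]$-basis of $R/E^H$ once the $a_J^{(i)}$ are promoted to variables. This should follow from the same regular-sequence argument as in Lemma~\ref{lemma:flatFamily}, since the top-degree parts (in the geometric grading) of the generators remain $e_1, \ldots, e_n$, but one has to be a little careful because the base ring $K[a_J^{(i)}][T]$ is no longer a PID. Once this freeness is in place, the remaining steps are pure bookkeeping on the two gradings.
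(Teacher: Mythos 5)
Your proof is correct and reaches the same key structural fact as the paper --- that the matrix entry $M_{l',l}$ of $\mult_u$ in the Artin basis is forced to have the form $c_{l',l}\,T^{|l|+d-|l'|}$ with $c_{l',l}\in K[a^{(i)}_J]$ auxiliary-homogeneous of degree $|l|+d-|l'|$ --- but arrives there by a noticeably cleaner route. The paper proves this constraint on the $M_{kl}$ by an explicit induction on the degree of a polynomial $p$: it writes $p$ in terms of the basis modulo $E_0=(e_1,\dots,e_n)$, replaces each $e_i$ by $-g_i^H$ modulo $E^H$, and tracks the $T$-exponent and auxiliary degree through the recursion. You instead observe that each generator $e_i+g_i^H$ is simultaneously homogeneous of degree $i$ in the geometric grading $(\deg v_i=\deg T=1,\ \deg a^{(i)}_J=0)$ and the auxiliary grading $(\deg v_i=1,\ \deg T=0,\ \deg a^{(i)}_J=i-|J|)$, so that $R/E^H$ is bi-graded and the Artin monomials are bi-homogeneous; the constraint on $M_{l',l}$ then drops out purely formally, and the telescoping over a permutation of a principal $r\times r$ minor is likewise automatic. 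The price you pay is that you must establish freeness of $R/E^H$ over the larger base $K[a^{(i)}_J][T]$ with the Artin monomials as a basis, rather than over $K[T]$ for fixed $a$'s as in the paper; but as you note this follows from the same regular-sequence/degeneration argument (the top $v$-degree parts of the generators are $e_1,\dots,e_n$, which remain a regular sequence after base change to any commutative ring, so $B[v]/E^H$ is $B$-free with basis the standard monomials of $B[v]/(e_1,\dots,e_n)$). Once that is in place, your bi-graded bookkeeping replaces the paper's induction, which is a genuine simplification.
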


\begin{proof}
    Let $E_0\coloneqq E^H|_{T=0} \subseteq P$ and choose a homogeneous $K$-basis $(b_k)$ of $P/E_0$. By graded Nakayama, the same $(b_k)$ is a $K[T]$-basis of the free module $P[T]/E^H$. Let $M=(M_{kl})$ be the matrix of $\mult_u$ in this basis. Then the $r$-th characteristic coefficient is, up to sign, the sum of all principal $r \times r$ minors of $M$, so it suffices to show that each entry $M_{kl}$ is a homogeneous polynomial of degree $\deg(b_l) - \deg(b_k) + d$ in the $a^{(i)}_J$, multiplied by $T^{\deg(b_l) - \deg(b_k) + d}$.

    Since $e_1, \ldots, e_n$ form a regular sequence, we have $E^H = (e_1+g_1^H, \dots, e_n+g_n^H)$, i.e., $E^H$ is generated by the homogenizations of the generators of $E$; in other words, the generators of $E$ form a Gröbner basis for the weight order given by total degree. The homogenized perturbations $g_i^H$ have the property that in each of their terms the exponent of $T$ is exactly the auxiliary degree of the corresponding $a^{(i)}_J$.
    
    Let now $p \in P$ be any homogeneous polynomial. Writing $\overline{p} \in P[T]/E^H$ as a linear combination of the $b_k$, we claim that the coefficient of every $b_k$ is a homogeneous polynomial in the $a^{(i)}_J$ of degree $\deg(p) - \deg(b_k)$, times $T^{\deg(p) - \deg(b_k)}$. Applying this to $p \coloneqq u b_l$ will then finish the proof. To prove the claim, we use induction on $\deg(p)$. The case $\deg(p) = 0$ is clear. Next, since $(b_k)$ is also a basis of $P/E_0$, we may uniquely write $p$ as
    \begin{equation*}
       p = \sum_{j \colon \deg(b_j) = \deg(p)} \alpha_j b_j + b,
    \end{equation*}
    where $\alpha_j \in K$ and $b \in (E_0)_{\deg(p)} = (e_1, \ldots, e_n)_{\deg(p)}$ are in particular independent of the $a^{(i)}_J$. Write $b = \sum_{i = 1}^n p_i e_i$ for some homogeneous $p_i \in P$ of degree $\deg(p) - i$. Modulo $E^H$, we then have
    \begin{equation*}
        b \equiv - \sum_{i=1}^n p_i g_i^H = - \sum_{i=1}^n \sum_{\substack{J_i \subseteq [n] \\ |J_i| < i}} T^{i-|J_i|} a^{(i)}_{J_i} (p_i v^{J_i}).
    \end{equation*}
    Each $p_i v^{J_i} \in P$ is homogeneous of degree $< \deg(p)$, so we may use the induction hypothesis to conclude.
\end{proof} 

We consider next the operation of extracting a certain ``top part'' of $R^u$, namely the part that is \emph{linear} when assigning degree $1$ to all the $a^{(i)}_J$ instead of the auxiliary degree.

\begin{defn}
    Let $u\in P$ be homogeneous of degree $d$, and let $R^u$ be as in Lemma \ref{lemma:polynomiality}. We define $R^u_{\mathrm{top}}$ to be the sum of only those terms of $R^u$ whose variables lie in antidiagonals of total auxiliary degree $d$ in Table~\ref{table:1}, i.e., we set to zero every variable $a^{(i)}_J$ with $i-|J| < d$.
\end{defn}
Equivalently, $R^u_{\mathrm{top}}$ is obtained by the following universal specialization.

\begin{lemma}\label{lemma:top}
        Let $u$ and $d$ be as above. Let
    \[
        \phi: K[(a^{(i)}_J)_{i,J}] \longrightarrow K[(a^{(i)}_J)_{i,J}]
    \]
    be the $K$-algebra endomorphism which, for each cell in Table~\ref{table:1} of auxiliary degree $< d$, sends all variables in that cell to the same scalar in $K$, sends $a^{(i)}_\emptyset$ to $0$ for all $i<d$, and fixes all remaining variables. Then
    \[
        \phi(R^u) = R^u_{\mathrm{top}}.
    \]
\end{lemma}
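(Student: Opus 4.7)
The plan is to decompose $R^u$ according to auxiliary degree and reduce the identity $\phi(R^u) = R^u_{\mathrm{top}}$ to a vanishing statement that follows from a trace computation on the classical coinvariant algebra.

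First, by Lemma~\ref{lemma:polynomiality}, $R^u$ is homogeneous of auxiliary degree $d$, and since every variable $a^{(i)}_J$ has auxiliary degree $i-|J|\geq 1$, every monomial of $R^u$ is either (a) a single variable of auxiliary degree exactly $d$, or (b) a product of two or more variables, each of auxiliary degree strictly less than $d$. Writing $R^u = R^u_{\mathrm{top}} + R^u_{\mathrm{rest}}$ for this split, the map $\phi$ fixes $R^u_{\mathrm{top}}$ and substitutes each variable appearing in $R^u_{\mathrm{rest}}$ by a scalar in $K$, so the claim reduces to $\phi(R^u_{\mathrm{rest}}) = 0$.

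Second, let $\phi_0$ be the augmentation of $\phi$ that additionally sends every ``free'' variable $a^{(i)}_J$ (i.e., with $i-|J|\geq d$) to zero. Then $\phi_0(R^u_{\mathrm{top}}) = 0$, since its variables are zeroed, and $\phi_0(R^u_{\mathrm{rest}}) = \phi(R^u_{\mathrm{rest}})$, since $R^u_{\mathrm{rest}}$ involves no free variables; it therefore suffices to show $\phi_0(R^u) = 0$. The key observation is that, by the ``same scalar in each cell'' form of $\phi$ and the condition $c_{i,0}=0$ for $i<d$, we have
\begin{equation*}
\phi_0(g_i^H) \;=\; \sum_{\substack{1\leq k\leq i-1\\ i-k<d}} c_{i,k}\, e_k\, T^{i-k} \;\in\; (e_1,\ldots,e_{i-1})\cdot P[T].
\end{equation*}
A straightforward induction on $i$ then gives $e_i \in \phi_0(E^H)$ for all $i$, so $\phi_0(E^H) = (e_1,\ldots,e_n)\cdot P[T]$ and $P[T]/\phi_0(E^H) \cong (P/(e_1,\ldots,e_n))\otimes_K K[T]$ is the coinvariant algebra extended by scalars.

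Third, since $\Tr(\mult_u)$ depends polynomially on the $a^{(i)}_J$ (Lemma~\ref{lemma:polynomiality}), applying $\phi_0$ to the identity $\Tr(\mult_u) = T^d R^u$ yields $\Tr_{P[T]/\phi_0(E^H)}(\mult_u) = T^d \phi_0(R^u)$. But on the specialized quotient, $\mult_u$ is the $K[T]$-linear extension of multiplication by $u$ on the graded coinvariant algebra $P/(e_1,\ldots,e_n)$, and since $\deg u = d\geq 1$ this map strictly raises the grading; in any graded basis its matrix has vanishing diagonal, so the trace is $0$. We conclude $T^d\phi_0(R^u) = 0$ and hence $\phi_0(R^u) = 0$, completing the proof. (The case $d=0$ is trivial: $\phi$ is the identity and $R^u_{\mathrm{top}} = R^u$.) The main obstacle is the inductive identification $\phi_0(E^H) = (e_1,\ldots,e_n)\cdot P[T]$ of the second step: it uses the precise form of $\phi$ in an essential way, since forcing all square-free monomials of a given degree in a cell to share a common coefficient is exactly what turns the perturbed generators into $K$-linear combinations of lower elementary symmetric polynomials, which can then be absorbed into the earlier generators. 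Once this degeneration to the classical coinvariant ideal is achieved, the graded structure of that algebra disposes of the trace essentially for free.
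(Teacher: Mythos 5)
Your proof is correct, and it takes a genuinely different route from the paper's. The paper applies $\phi$ and then performs a triangular change of generators (Gaussian elimination on the columns of Table~\ref{table:1}) that absorbs the scalar-multiple-of-$e_k$ terms of auxiliary degree $<d$ into earlier generators; the key point there is that since the cleaned-up generator in column $k$ carries only terms of degree $\leq k-d$, the elimination never disturbs the $d$-th antidiagonal cell of later columns, so the resulting generators have $\Tr(\mult_u)$ visibly equal to $T^d R^u_{\mathrm{top}}$. Your argument instead strengthens the specialization to $\phi_0$ (which also kills the free variables), reduces the claim to $\phi_0(R^u)=0$, and identifies $\phi_0(E^H)$ with the coinvariant ideal $(e_1,\ldots,e_n)\cdot P[T]$ by the same triangular structure of Table~\ref{table:1}; the vanishing of the trace of multiplication by a positive-degree element on a graded Artinian algebra then closes the argument. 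The functoriality step you rely on --- that applying $\phi_0$ to the matrix entries of $\mult_u$ from the proof of Lemma~\ref{lemma:polynomiality} computes $\mult_u$ on $P[T]/\phi_0(E^H)$ --- is valid because the basis there is lifted from $P/E_0 = P/(e_1,\ldots,e_n)$ and does not depend on the $a^{(i)}_J$. Net effect: the paper's proof is a hands-on normal-form computation, while yours packages the same triangular mechanism into a degeneration to the coinvariant algebra, which is arguably cleaner and makes the role of the graded structure explicit.
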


\begin{proof}
    Since the characteristic polynomial of $\mult_u$ only depends on the ideal $E^H$, the polynomial $R^u$ is unchanged if we replace the generators $e_i+g_i^H$ by any other set obtained from them by adding scalar multiples of earlier generators $e_j+g_j^H$ with $j<i$ (a ``triangular change of generators'' in Table~\ref{table:1}). In particular, after applying~$\phi$ we may use such changes to eliminate all terms in $g_i^H$ of degree $>i-d$ in $v_1,\dots,v_n$ without affecting the terms of degree exactly $i-d$, in a manner resembling Gaussian elimination. For the resulting generators, the trace $\Tr(\mult_u)$ is precisely $T^d R^u_{\mathrm{top}}$. Since we have not altered the ideal, the equality $\phi(R^u)=R^u_{\mathrm{top}}$ follows.
\end{proof}

The final two remarks record the particularly simple form of the top part $R^u_{\mathrm{top}}$ and the natural symmetry of $R^{u,r}$ under permutations and column operations.

\begin{rmk}\label{rmk:single_perturbation}
    In the situation of Lemma~\ref{lemma:top}, $R^u_{\mathrm{top}}$ is homogeneous and \emph{linear} in the variables $a^{(i)}_J$ with $i-|J| = d$. In particular, for each $0\le k\le n-d$, consider the specialization which sets to zero all variables in the $d$-th antidiagonal of Table~\ref{table:1} except those in the cell $a^{(k+d)}_k$; denote the resulting polynomial by $R^{u}_{\mathrm{top},k}$. Then
    \[
        R^{u}_{\mathrm{top}} = \sum_{k=0}^{n-d} R^{u}_{\mathrm{top},k}.
    \]
    Moreover, $R^{u}_{\mathrm{top},k}$ is the trace of $\mult_u$ for all ideals of generalized spectral invariants of the form
    \[
       (e_1,\dots,e_{k+d-1},\, e_{k+d}+g_{k+d},\, e_{k+d+1},\dots,e_n),
    \]
    where $g_{k+d}$ is homogeneous of degree $k$ in $v_1,\dots,v_n$. $\hfill \diamond$
\end{rmk}

\begin{rmk}\label{rmk:symmetry}
    For any $\sigma \in \Sigma_n$ acting by
    \[
       \sigma(a_J^{(i)}) \coloneqq a_{\sigma^{-1}(J)}^{(i)},\qquad
       \sigma(v_j)\coloneqq v_{\sigma(j)},
    \]
    we have
    \[
       R^{\sigma(u),r} = \sigma^{-1}\bigl(R^{u,r}\bigr).
    \]
    In particular, if $\sigma(u)=u$, then $\sigma$ fixes $R^{u,r}$. In the special case $r=1$, the map $u\mapsto R^u$ is $K$-linear because so is $u\mapsto \Tr(\mult_u)$.$\hfill \diamond$
\end{rmk}

\subsection{Explicit leading relations} We now fix a choice of $u$ and compute $R^u_{\mathrm{top}}$ explicitly.

\begin{proposition}\label{prop:relations}
    Let $1 \leq m \leq n$ and set $u\coloneqq v_1 \cdots v_m \in P$. Let $R^u_{\mathrm{top}} = \sum_{k=0}^{n-m} R^u_{\mathrm{top},k}$ be as in Remark \ref{rmk:single_perturbation}. Then, for all $0\leq k \leq n-m$,
    \begin{equation*}
        R^u_{\mathrm{top},k}
        = m \sum_{j=0}^{\min(k,m)} (-1)^{j+1}\,(m+k-j-1)!\,(n-m-k+j)!\!
        \sum_{\substack{J\subseteq [n]\\ |J|=k\\ |J\cap [m]|=j}} a_J^{(m+k)}.
    \end{equation*}
\end{proposition}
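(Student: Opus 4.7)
By Remarks~\ref{rmk:single_perturbation} and~\ref{rmk:symmetry}, the coefficient of $a^{(m+k)}_J$ in $R^u_{\mathrm{top},k}$ is invariant under the stabilizer $\Sigma_{[m]}\times\Sigma_{[n]\setminus[m]}$ of $u = v_1\cdots v_m$, hence depends only on $j := |J\cap[m]|$. It thus suffices, for one fixed $J$, to compute the $a$-linear coefficient of $\Tr(\mult_u)$ on $P/I_J(a)$, where $I_J(a) = (e_1,\dots,e_{m+k-1},\,e_{m+k}+a\,v^J,\,e_{m+k+1},\dots,e_n)$. I would evaluate this via $\Tr(\mult_u) = \sum_p u(p)\,\mult_p$ (summed over geometric points of $V(I_J(a))$) combined with a generating-function identity.

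\textbf{Geometry and multiplicity.} The relations $e_l = 0$ for $l \ne m+k$ and $e_{m+k} = -av^J$ force $\prod_i(x-v_i) = x^{n-m-k}\bigl(x^{m+k}-(-1)^{m+k}av^J\bigr)$. Hence every nonzero solution is supported on some $S\subseteq [n]$ with $|S|=m+k$ and $J\subseteq S$, with values $v_i = \omega^{\tau(i)}\alpha$ on $S$ (for a primitive $(m+k)$-th root of unity $\omega$ and a bijection $\tau\colon S\to\{0,\dots,m+k-1\}$), subject to $\alpha^m = (-1)^{m+k}a\,\omega^{\sum_{i\in J}\tau(i)}$. Modulo the cyclic equivalence $(\alpha,\tau)\sim(\omega\alpha,\tau-1)$ this gives $m(m+k-1)!$ distinct geometric solutions per $S$. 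At each such point $p$, I claim the multiplicity equals $(n-m-k)!$: since $J\subseteq S$, the ideal $I_J(a)$ is invariant under $\Sigma_{[n]\setminus S}$ (which fixes $p$), and a local Taylor analysis of the $e_l$'s around $p$, after inverting $a$ and eliminating the linear relation from $e_1$ and the $w_l$ for $l\in S$, identifies the local ring with the coinvariant algebra of $\Sigma_{n-m-k}$ in the ``excess'' variables $w_i = v_i$ for $i\notin S$, of length $(n-m-k)!$.

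\textbf{Trace evaluation.} Since $u(0)=0$, only points with $[m]\cup J\subseteq S$ contribute, and there are $\binom{n-m-k+j}{j}$ such~$S$. On them $u(p) = \alpha^m\omega^{\sum_{[m]}\tau} = (-1)^{m+k}a\,\omega^{\sum_U\tau + \sum_T\tau}$ where $U = J\cup[m]$ and $T = J\cap[m]$. Summing gives
\[
    \Tr(\mult_u) = (n-m-k)!\,\binom{n-m-k+j}{j}\,\frac{m(-1)^{m+k}a}{m+k}\,\sum_\tau \omega^{\sum_U\tau + \sum_T\tau}.
\]
Grouping the bijections $\tau$ by the distribution pattern of the multiplicity function $\mu = \mathbf{1}_U + \mathbf{1}_T$ on $\{0,\dots,m+k-1\}$ identifies the character sum with $j!(m+k-2j)!\,j!\cdot [x^{m+k-2j}y^j]\,P(x,y)$ where $P(x,y) = \prod_{t=0}^{m+k-1}(1+x\omega^t+y\omega^{2t})$. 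Factoring $1 + xz + yz^2 = y(z-r)(z-s)$ with $r+s = -x/y$, $rs = 1/y$ gives $P(x,y) = 1 + y^{m+k} - y^{m+k}(r^{m+k}+s^{m+k})$, and the standard closed-form expansion of $r^{m+k}+s^{m+k}$ in terms of $r+s$ and $rs$ extracts
\[
    [x^{m+k-2j}y^j]\,P(x,y) = (-1)^{j+1-(m+k)}\frac{(m+k)(m+k-j-1)!}{j!(m+k-2j)!}.
\]
Substituting back and telescoping the factors produces exactly $m(-1)^{j+1}(m+k-j-1)!(n-m-k+j)!$.

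\textbf{Main obstacle.} The multiplicity claim $\mult_p = (n-m-k)!$ is the chief technical step: the cancellations afforded by the $\Sigma_{[n]\setminus S}$-symmetry reduce the local ideal to the coinvariant of $\Sigma_{n-m-k}$, but verifying this requires careful bookkeeping of higher-order terms in the Taylor expansion of the $e_l$'s. A cleaner alternative is the observation that the unordered multiset $\{v_i\}$ is determined by $I_J(a)$ and the $n!/(n-m-k)!$ orderings of $\{0^{n-m-k},\alpha,\omega\alpha,\dots,\omega^{m+k-1}\alpha\}$ must equi-distribute the total length $n!$ of $V(I_J(a))$.
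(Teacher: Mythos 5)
Your approach is correct in outline but genuinely different from the paper's. The paper works purely algebraically: it observes that the coefficients $\lambda_{k,j}$ are pinned down by (i) the value $\lambda_{0,0}$, (ii) the linear relation coming from adding $e_k$ to $e_{k+m}+g_{k+m}$, and (iii) the recursion $\lambda_{k,j}=-\lambda_{k-1,j-1}$, and it then proves the recursion by a delicate analysis of diagonal entries of $\mult_u$ in the Artin monomial basis, using Lemma~\ref{lemma:ArtinMonomialReduction}. You instead evaluate $\Tr(\mult_u)$ directly as $\sum_p \mult_p\,u(p)$ over the geometric points of $V(I_J(a))$, which is a more geometric and in some ways more transparent route: the $(m+k)$-th roots of unity and the constraint $\alpha^m = (-1)^{m+k}a\,\omega^{\sum_J\tau}$ are identified explicitly, and the character sum reduces to extracting a coefficient of $\prod_t(1+x\omega^t+y\omega^{2t})$, which closes via the standard expansion of $r^N+s^N$ in $e_1,e_2$. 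I checked the enumeration of supports ($\binom{n-m-k+j}{j}$ choices of $S$), the orbit count $m(m+k-1)!$ under the cyclic equivalence, the character-sum identity, and the final telescoping; they all reproduce $m(-1)^{j+1}(m+k-j-1)!(n-m-k+j)!$.

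The one real gap is the multiplicity claim $\mult_p = (n-m-k)!$, which you correctly flag as the ``chief technical step'' but do not fully prove, and your proposed ``cleaner alternative'' is imprecise as stated: the unordered multiset $\{v_i\}$ is \emph{not} determined by $I_J(a)$ alone, since $\alpha$ depends on both $S$ and $\tau$ through $\alpha^{m+k}=(-1)^{m+k}av^J$, so one cannot simply equi-distribute $n!$ over the orderings of a single multiset. The claim is nonetheless true, but the argument requires care: near $p$ the factorization $\prod_i(x-v_i)=P_S(x)P_{[n]\setminus S}(x)$ shows that $v^J$ is locally a function of the coefficients of the characteristic polynomial, so the constraint $e_{m+k}+av^J=0$ pulls back a hypersurface in the target of $\psi\colon\AA^n\to\AA^n/\Sigma_n$; one must then check that this hypersurface meets the $c_{m+k}$-axis transversally at $\psi(p)$, i.e.\ that $1+a f'(c_0)\neq 0$, which reduces to $m+k\neq (-1)^{m+k}k$ and holds in characteristic $0$ or $>n$. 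Without filling in this local analysis (or an equivalent one), the proof is not complete, though the surrounding structure is sound and gives a nice alternative perspective to the paper's recursion-based argument.
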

\begin{proof}
 By Remarks~\ref{rmk:symmetry} and~\ref{rmk:single_perturbation}, since $\sigma(u) = u$ for all $\sigma \in \Sigma_{\{1, \ldots, m\}} \times \Sigma_{\{m+1,\ldots,q\}} \subseteq \Sigma_q$,  there exists scalars $\lambda_{k,j} \in K$ such that 
 \begin{equation*}
     R^u_{\mathrm{top},k}
        = \sum_{j=0}^{\min(k,m)} \lambda_{k,j}
          \sum_{\substack{J\subseteq [n]\\ |J|=k\\ |J\cap[m]|=j}} a_J^{(m+k)}.
 \end{equation*}
 Moreover, for $k\geq 1$, $R^u_{\mathrm{top},k}$ is unchanged if we add a multiple of $e_k$ to $e_{k+m}+g_{k+m}$, hence
    \begin{equation}\label{eq:lambda_kj}
        \sum_{j=0}^{\min(k,m)} \lambda_{k,j} \binom{m}{j}\binom{n-m}{k-j} = 0 \quad\text{for all }k\ge 1.
    \end{equation}
    We claim that we also have the recursion
    \begin{equation}\label{eq:recursion}
        \lambda_{k,j} = -\,\lambda_{k-1,j-1}\qquad\text{for all }k\ge j\ge 1.
    \end{equation}
    Assuming \eqref{eq:recursion}, the value of $\lambda_{0,0}$ together with \eqref{eq:lambda_kj} determines all $\lambda_{k,j}$ uniquely.

    To compute $\lambda_{0,0}$, note that
    \[
        R^u_{\mathrm{top},0} = \lambda_{0,0}\,a^{(m)}_\emptyset.
    \]
    Setting $a^{(m)}_\emptyset=1$, $\lambda_{0,0}$ is precisely the trace of $\mult_u$ for the ideal
    \[
       (e_1,\dots,e_{m-1},\,e_m+1,\,e_{m+1},\dots,e_n),
    \]
    which is $\Sigma_n$-invariant. Thus
    \begin{align*}
        \lambda_{0,0}
        &= \Tr(\mult_u)
         = \frac{1}{\binom{n}{m}}\sum_{\overline{\sigma}\in \Sigma_n/(\Sigma_m\times \Sigma_{n-m})} \Tr\bigl(\mult_{\sigma(u)}\bigr)
         = \frac{\Tr(\mult_{e_m})}{\binom{n}{m}} \\
        &= \frac{\Tr(\mult_{-1})}{\binom{n}{m}}
         = \frac{-n!}{\binom{n}{m}}
         = -\,m!\,(n-m)!.
    \end{align*}
    This matches the coefficient in the formula of the proposition for $k=0$.

    It is straightforward to verify that the coefficients in the statement of the proposition satisfy the recursion~\eqref{eq:recursion}. We now check they satisfy \eqref{eq:lambda_kj}. For $k\ge 1$ we have
    \begin{align*}
        &\ \ \ \ \, \sum_{j=0}^{\min(k,m)} \lambda_{k,j} \binom{m}{j}\binom{n-m}{k-j} \\
        &= m \sum_{j=0}^{k} (-1)^{j+1} (m+k-j-1)!\,(n-m-k+j)!\,\binom{m}{j}\binom{n-m}{k-j} \\
        &= -\,\frac{m}{k} m!\,(n-m)!\sum_{j=0}^k (-1)^j \binom{k}{j}\binom{m+k-j-1}{m-j}.
    \end{align*}
    The last sum is the coefficient of $t^m$ in
    \[
       \Bigl(\sum_{j=0}^k (-1)^j\binom{k}{j}t^j\Bigr)
       \Bigl(\sum_{j'\ge 0}\binom{j'+k-1}{j'} t^{j'}\Bigr)
       = (1-t)^k \cdot (1-t)^{-k} = 1,
    \]
    hence vanishes. This proves \eqref{eq:lambda_kj}.

    It remains to prove recursion~\eqref{eq:recursion}. For this, let $M$ be the matrix representing $\mult_u$ in the Artin monomial basis. Our argument will in fact show that recursion~\eqref{eq:recursion} even holds on the level of the individual diagonal entries $M_{xx}$.

    Let $x = v_1^{l_1} \cdots v_{n-1}^{l_{n-1}}$ be an Artin monomial. Since $u = v_1 \cdots v_m$ has degree $m$, the entry $M_{xx}$ is a homogeneous polynomial of degree $m$ in the $a^{(m+k)}_J$, $|J| = k$, times $T^m$, by the proof of Lemma~\ref{lemma:polynomiality}. In particular, we may base change to the Artinian $K$-algebra $A \coloneqq K[T]/(T^{m+1})$ and regard $E^H$ as an ideal in $P_A = P \otimes_K A$.

    We abbreviate $v \coloneqq v_{m+1}^{l_{m+1}} \cdots v_{n-1}^{l_{n-1}}$. We have $ux = v_1^{l_1 + 1} y v$ with $y \coloneqq v_2^{l_2 + 1} \cdots v_m^{l_m + 1}$. Let $y' = \sum_j \alpha_j b_j$ be the Artin reduction of $y$ with $\alpha_j \in K$ and Artin monomials $b_j$. If $\alpha_j \neq 0$, then $b_j$ does not contain the variables $v_{m+1}, \ldots, v_n$ by the proof of Lemma~\ref{lemma:ArtinMonomialReduction}. Let $p \coloneqq y - y' \in (e_1, \ldots, e_n)$ and write $p = \sum_{i=1}^n p_i e_i$ with $p_i$ homogeneous of degree $\deg(p) - i$. We have
    \begin{equation}\label{eq:ux}
        ux = v_1^{l_1+1} y' v + v_1^{l_1 + 1} p v.
    \end{equation}
    For the second summand in \eqref{eq:ux}, we observe that modulo $E^H$ we have
    \begin{equation*}
        v_1^{l_1 + 1} p v = \sum_{i=1}^n v_1^{l_1+1} p_i e_i v \equiv - T^m v_1^{l_1 + 1} p_{m+k} g_{m+k} v.
    \end{equation*}
    Since $T^{m+1} = 0$ in $P_A$, the $x$-coefficient of $v_1^{l_1 + 1} p v$ as an element in the quotient $P_A/E^H$ agrees with the Artin reduction $\xi$ of the polynomial $-v_1^{l_1 + 1} p_{m+k} g_{m+k} v \in P$, up to multiplication by $T^m$. But by Lemma~\ref{lemma:ArtinMonomialReduction}, every Artin monomial occurring in $\xi$ has $v_1$-exponent $\geq l_1 + 1$, in particular $x$ itself does not occur. Therefore, the second summand of \eqref{eq:ux} does not contribute to the diagonal entry $M_{xx}$.
    Turning to the first summand of \eqref{eq:ux}, write $b_j = v_1^{s_j} \tilde{b}_j$ such that $\tilde{b}_j$ is not divisible by $v_1$. Then
    \begin{equation*}
        v_1^{l_1+1} y' v = \sum_j \alpha_j v_1^{l_1 + 1 + s_j} \tilde{b}_j v.
    \end{equation*}
    We observe that those summands with $l_1 + 1 + s_j \leq n-1$ are Artin monomials of degree $\deg(x) + m > \deg(x)$, so they cannot contribute to $M_{xx}$. Continuing with the remaining summands for which $l_1 + 1 + s_j \geq n$, we use Corollary~\ref{cor:v1^q} to write
    \begin{align*}
        \sum_{j \colon l_1 + 1 + s_j \geq n} \alpha_j v_1^{l_1 + 1 + s_j} \tilde{b}_j v &= \sum_{j \colon l_1 + 1 + s_j \geq n} \alpha_j \tilde{b}_j v \sum_{i = 1}^n (-1)^{i-1}  v_1^{l_1 + 1 + s_j - i} e_i \\
        &\equiv T^m \cdot (-1)^{m+k} \sum_{j \colon l_1 + 1 + s_j \geq n} \alpha_j \tilde{b}_j v v_1^{l_1 + 1 + s_j - m - k} g_{m+k},
    \end{align*}
    modulo $E^H$. We denote the coefficient of $T^m$ in the last expression by $W_k(g_{m+k})$, viewed as a polynomial in $P$ for every choice of perturbation $g_{m+k}$ (equivalently, for every choice of $a^{(m+k)}_J \in K$, $|J| = k$). Since $T^{m+1} = 0$ in $P_A$, we conclude once more that the coefficient of $x$ of the first summand in $\eqref{eq:ux}$ agrees with the coefficient of $x$ in the Artin reduction of $W_k(g_{m+k})$. The final observation is that, if $h$ is a square-free monomial in $v_2, \ldots, v_n$ of degree $k-1$, then, as polynomials in $P$, we have $W_k(v_1h) = - W_{k-1}(h)$. Since taking the Artin reduction is $K$-linear, the coefficient of $x$ in the Artin reduction of $W_k(v_1h)$ is the negative of the coefficient of $x$ in the Artin reduction of $W_{k-1}(h)$. But now, setting $g_{m+k}$ equal to $v_1 h$, the trace of $\mult_u$ is exactly $\lambda_{k,j}$, where $j$ is the number of variables in $v_1 h$ with indices in $\{1,2,\ldots,m\}$. Similarly, for $k-1$ instead of $k$, setting $g_{m+k-1}$ equal to $h$, the trace of $\mult_u$ is exactly $\lambda_{k-1,j-1}$. This completes the proof of recursion~\eqref{eq:recursion}.
\end{proof}

With Proposition~\ref{prop:relations}, as we will see, we have established just enough relations to prove our first main theorem.

\subsection{\emph{Proof of Theorem~\ref{thm:IntroMain}}} 
One direction is straightforward. If, for each $1\le k \le n$, all principal $k \times k$ minors of $A$ are equal, then by Lemma~\ref{lem:SpectralInvariants} each spectral invariant $S_i$ is a linear combination of elementary symmetric polynomials of degrees $\le i$ with top-degree part equal to $e_i$ and without constant terms. In particular,
    \[
       (S_1,\dots,S_n) = (e_1,\dots,e_n),
    \]
    so the only common zero is the origin. Thus there is no nonzero diagonal matrix $D$ with $A+D$ isospectral to $A$.

 For the converse, let $E\coloneqq (S_1,\dots,S_n)$ be the ideal of spectral invariants of $A$. Assume that $E$ vanishes only at the origin. We must show that for every $1 \le m \le n$ all principal $m \times m$ minors of $A$ agree.

  We proceed by induction on $m$. For $m=1$, the statement is equivalent to the equality of all diagonal entries of $A$, which follows immediately from the vanishing of $R^{v_1}$ (see below). Suppose now that for some $0 \le d \le n-1$ all principal $i \times i$ minors of $A$ agree for each $i \le d$. We show that all principal $(d+1) \times (d+1)$ minors are equal.

  By Lemma~\ref{lem:SpectralInvariants}, the assumption for $i\le d$ implies that in the table description of $E$ (Table~\ref{table:1}), all variables in each cell of auxiliary degree $\le d$ are equal. By Lemma~\ref{lemma:top}, for any homogeneous $u$ of degree $d+1$ the evaluation of $R^u$ at $E$ agrees with the evaluation of $R^u_{\mathrm{top}}$ at $E$. In particular, this holds for $u=v_1\cdots v_{d+1}$.

   On the other hand, by Lemma~\ref{lemma:flatFamily}, under the hypothesis that $E$ vanishes only at the origin, the composition $\HC\circ c$ is constant, hence for any homogeneous $u$ of positive degree we must have $R^u(E)=0$. For $u=v_1\cdots v_{d+1}$ this yields
    \[
        R^{v_1\cdots v_{d+1}}_{\mathrm{top}}(E) = R^{v_1\cdots v_{d+1}}(E) = 0.
    \]
    We claim that this forces equality of all principal $(d+1) \times (d+1)$ minors of $A$, completing the inductive proof.

    Set $m\coloneqq d+1$. Using Proposition~\ref{prop:relations} and Remark~\ref{rmk:single_perturbation}, we can write
    \[
       R^{v_1\cdots v_m}_{\mathrm{top}} = \sum_{k=0}^{n-m} R^{v_1\cdots v_m}_{\mathrm{top},k}.
    \]
    Evaluating at $E$ and using Lemma~\ref{lem:SpectralInvariants} to substitute
    \[
       a_J^{(m+k)} = \sum_{\substack{N\subseteq [n]\setminus J\\ |N|=m}} \det(A_N),
    \]
    we get
    \begin{align*}
       R^{v_1\cdots v_m}_{\mathrm{top}}(E)
         &= \sum_{k=1}^{n-m} \sum_{j=0}^{\min(k,m)} \lambda_{k,j}
            \sum_{\substack{J\subseteq [n]\\ |J|=k\\ |J\cap [m]|=j}}
            \sum_{\substack{N\subseteq [n]\setminus J\\ |N|=m}} \det(A_N) \\
         &= \sum_{\substack{N\subseteq [n]\\ |N|=m}} \det(A_N)
            \sum_{\emptyset\neq J\subseteq [n]\setminus N} \lambda_{|J|,|J\cap [m]|},
    \end{align*}
    with $\lambda_{k,j}$ as in the proof of Proposition~\ref{prop:relations}. (The $k=0$ term vanishes because the $S_i$ have no constant terms.) We can rewrite
    \[
       \sum_{\emptyset\neq J\subseteq [n]\setminus N} \lambda_{|J|,|J\cap [m]|}
       = m!\,(n-m)! + \sum_{J\subseteq [n]\setminus N} \lambda_{|J|,|J\cap [m]|}.
    \]
    We set $r \coloneqq |[m] \setminus N|$ and decompose $J$ as $J = J_1 \sqcup J_2$ such that $J_1 \subseteq [m] \setminus N$ and $J_2 \subseteq [n] \setminus ([m] \cup N)$ with $|J_1| = j$ and $|J_2| = j'$. This yields
    \begin{equation*}
         \sum_{J \subseteq [n] \setminus N} \lambda_{|J|,|J \cap [m]|} = - m \sum_{j = 0}^r (-1)^j \binom{r}{j} \sum_{j' = 0}^{n-m-r} \binom{n-m-r}{j'} (m-1+j')! (n-m-j')!.
    \end{equation*}
   The sums over $j$ and $j'$ are independent, and $\sum_{j = 0}^r (-1)^j \binom{r}{j} = 0$ if $r \geq 1$, so we are left only with the term for $r = 0$, i.e., $N = [m]$. In this case, we get
    \begin{align*}
         \sum_{J \subseteq [n] \setminus [m]} \lambda_{|J|,|J \cap [m]|} &= -m \sum_{j' = 0}^{n-m} \binom{n-m}{j'} (m-1+j')! (n-m-j')! \\
         &= -m \sum_{j' = 0}^{n-m} \binom{m-1+j'}{m-1} (m-1)! (n-m)! \\
         &= - m! (n-m)! \sum_{j' = 0}^{n-m} \binom{m-1+j'}{m-1} \\
         &= - m! (n-m)! \binom{n}{m} = - n!.
    \end{align*}
    Putting everything together, we obtain
    \[
       R^{v_1\cdots v_m}_{\mathrm{top}}(E)
       = m!(n-m)! \sum_{\substack{N\subseteq [n]\\ |N|=m}} \det(A_N)
         \;-\; n!\,\det(A_{[m]}).
    \]
    Since $\operatorname{char}K=0$ or $>n$, the integer $n!$ is invertible in $K$, so this expression vanishes if and only if
    \begin{equation}\label{eq:principalMinors}
       \det(A_{[m]})
       = \binom{n}{m}^{-1}\sum_{\substack{N\subseteq [n]\\ |N|=m}} \det(A_N),
    \end{equation}
    i.e., the principal $m \times m$ minor on indices $[m]$ equals the average of all principal $m \times m$ minors.

    Finally, conjugating $A$ by any permutation matrix corresponding to $\sigma\in \Sigma_n$ leaves the right-hand side of \eqref{eq:principalMinors} invariant, while the left-hand side becomes $\det(A_{\sigma([m])})$. Since every principal $m \times m$ minor arises in this way, we conclude that all such minors are equal. \qed

 \begin{proof}[Proof of Corollary~\ref{cor: adjacency matrix}]
     Follows immediately as all $2 \times 2$ principal minors must be equal.
 \end{proof}
\section{Floquet Isospectrality}~\label{SEC:Floquet Isospectrality}

Periodic operators serve as ``tight-binding" models whose spectra describe electron dynamics in crystalline solids such as those found in many nano-materials and topological insulators \cites{ksurvey, Kittel}. The study of Floquet isospectrality of such operators has a rich history that began around four decades ago ~\cites{kapiii,kapi,kapii,liujde, flmrp, saburova, VPapan, liu2021fermi, Burak}. One continuing thread of research has been finding potentials that are isospectral to the zero potential $\bfz$ ~\cites{ERT84,MT76,ERTII,gki,wa,gui90,eskin89}.  

The spectra of discrete periodic operators can be realized as the projections of algebraic varieties called Bloch varieties, and there have been many recent papers dedicated to analyzing spectral problems through algebraic methods, see \cites{kuchment2023analytic, shipman2025algebraic, liujmp22} are references therein.

\subsection{Operators on $\ZZ^d$} \label{sec: Operators on Periodic Graphs}
Let $\Delta$ denote the discrete Laplacian acting on square-summable functions $u \in \ell^2(\ZZ^d)$ by
\begin{equation*}
	(\Delta u)(n)=\sum_{|n'-n|_1=1} u(n'),
\end{equation*}
where, as usual, for $n, n' \in \ZZ^d$ we write $|n'-n|_1 := \sum_{i=1}^d |n'_i-n_i|$.
We will use the shorthand $Q\ZZ \coloneqq q_1\ZZ \oplus \dots \oplus q_d \ZZ$ whenever $Q = (q_1, \ldots, q_d) \in \ZZ^d_{>0}$ is a tuple of periods. A potential $V:\ZZ^d \to \CC$ is said to be \emph{$Q\ZZ$-periodic} if
\begin{equation*}
V(n+m)=V(n),\,\,\text{for all}\,\, n\in \mathbb{Z}^d\,\,\text{and}\,\,m\in Q\ZZ.
\end{equation*}

By adding a $Q\ZZ$-periodic potential $V$, acting as a multiplication operator, to our Laplacian, we obtain the discrete periodic Schr\"odinger operator $\Delta + V$. Typically, one is interested in the spectrum of this operator acting on $\ell^2(\ZZ^d)$:
\begin{equation}\label{eq1}
(\Delta u)(n) + V(n)u(n) = \lambda u(n),\,\,\text{for all}\, n \in \ZZ^d.
\end{equation}

Floquet theory reveals that the generalized eigenfunctions of $\Delta + V$ are exactly $Q\ZZ$-quasiperiodic functions with Floquet multiplier in $\TT^d$; that is, functions $u : \ZZ^d \to \CC$ 
subject to the Floquet boundary condition
\begin{equation}~\label{eq2}
u(n + q_j\delta_j) = z_j u(n), \text{ for all } n \in \ZZ^d \,\,,z_j\in \TT\,\, \text{and } j \in [d] = \{1,\ldots, d\},
\end{equation}
where 
$\{\delta_j\}_{j=1}^d$ is the standard basis of $\R^d$ and $\TT$ is the complex unit circle.

Writing $q = \prod_{i=1}^d q_i$, equation~\eqref{eq1} with the boundary condition~\eqref{eq2} can be realized as the eigen-equation of a $q \times q$ matrix $L_V(z)$, where $z = (z_1, \dots, z_d)$.  In particular, when $d = 1$, $\Delta + V$ can be realized as matrix multiplication by the following finite matrix:       \begin{equation}\label{eq:defmatricesdim1}  L_{V} = \begin{cases}
       \begin{pmatrix}
            V(1) & 1 & 0 & \dots & 0 & z_1^{-1} \\
            1 & V(2) & 1 & 0 & \dots & 0 \\
            0 & 1 & \ddots & \ddots & \ddots & \vdots \\
            \vdots & \ddots & \ddots & \ddots & \ddots & 0\\ 
            0 & 0 & \dots & 1 & V(q_1-1) & 1 \\
            z_1 & 0 & \dots & 0 & 1 & V(q_1) \\
        \end{pmatrix} & \text{for }q_1>2, \\ \begin{pmatrix}
            V(1) & 1 +z_1^{-1} & \\
            1 + z_1 & V(2)  \end{pmatrix} & \text{for }q_1=2, \\ 
            \quad V(1) + z_1 + z_1^{-1} & \text{for } q_1=1.  \end{cases}\end{equation}
For $d >1$, $\Delta + V$ has the following recursive matrix structure:
 \begin{equation}\label{eq:defmatricesgen}
        L_{V}=\begin{cases}\begin{pmatrix}
            L_{\tilde{V}_1} & I_{\tilde{q}} & 0 & \dots & 0 & z_d^{-1} I_{\tilde{q}} \\
            I_{\tilde{q}} & L_{\tilde{V}_2} & I_{\tilde{q}} & 0 & \dots & 0 \\
            0 & I_{\tilde{q}} & \ddots & \ddots & \ddots & \vdots \\
            \vdots & \ddots & \ddots & \ddots & \ddots & 0\\ 
            0 & 0 & \dots & I_{\tilde{q}} & L_{\tilde{V}_{q_{d}-1}} & I_{\tilde{q}} \\
            z_d I_{\tilde{q}} & 0 & \dots & 0 & I_{\tilde{q}} & L_{\tilde{V}_{q_d}} \\
        \end{pmatrix} & \text{for }q_d>2, \\ \begin{pmatrix}
            L_{\tilde{V}_1} & I_{\tilde{q}} +z_d^{-1}I_{\tilde{q}} & \\
            I_{\tilde{q}} + I_{\tilde{q}}z_d & L_{\tilde{V}_2}  \end{pmatrix} & \text{for }q_d=2, \\ \quad \ \ 
            L_{\tilde{V}_1} + z_1I_{\tilde{q}} + z_1^{-1}I_{\tilde{q}} &\text{for } q_d=1, \end{cases}  \end{equation}
where $\tilde{q} = \frac{q}{q_d}$, and $\tilde{V}_i$ is a $q_1\ZZ \times \cdots \times q_{d-1}\ZZ$-periodic potential given by $\tilde{V}_i(n_1,\dots, n_{d-1}) = V(n_1,\dots, n_{d-1},i)$.

Denote by $\sigma_V(z):=\sigma(L_V(z))$ the (multi)set of eigenvalues of $L_V(z)$, including algebraic multiplicity.  It is a well-known consequence of Floquet theory that
\begin{equation*}
    \sigma(\Delta + V) = \bigcup_{z \in \TT^d} \sigma_V(z).
\end{equation*}

\begin{definition}\label{def:FIso}
Assume that $V$ and $V'$ are $Q\ZZ$-periodic potentials. Two operators $\Delta + V$ and $\Delta + V'$ are called \emph{Floquet isospectral} if $\sigma_V(z) = \sigma_{V'}(z)$ for all $z \in \TT^d$. In this case, it is also common to say that the $Q\ZZ$-periodic potentials $V$ and $V'$ are Floquet isospectral.
\end{definition}

Treating $z$ as indeterminates, we may view $L_{V}(z)$ as a matrix with Laurent polynomial entries in $\CC[z_1^{\pm},\dots, z_d^{\pm}]$. It is easy to see that $L_{V}(z) = L_V^T(z^{-1})$. Denoting the characteristic polynomial of $L_{V}(z)$ by $D_V(z,\lambda)$, we call the vanishing set of $D_V(z,\lambda)$, restricted to $z \in \TT^d$, the \emph{dispersion relation}, and the variety of $D_V(z,\lambda)$ in $(\CC^\ast)^d \times \CC$ the \emph{Bloch variety}. It is easy to see that two potentials $V$ and $V'$ are Floquet isospectral if and only if $D_V(z,\lambda)$ and $D_{V'}(z,\lambda)$ are the same polynomial. 

For a polynomial $f = \sum_{(a,b)} c_{a,b} z^a\lambda^b \in \CC[z_1^{\pm},\dots, z_d^{\pm},\lambda]$ we define $[z^a\lambda^b] f := c_{a,b}$ to denote the coefficient of the monomial $z^a \lambda^b$ in $f$. Clearly, two polynomials $f$ and $g$ are equal if and only if $[z^a\lambda^b] f = [z^a\lambda^b] g$ for all $(a,b) \in \ZZ^{d+1}$.

By Definition~\ref{def:FIso}, it follows that two potentials $V$ and $V'$ are Floquet isospectral if and only if the following system of equations is satisfied:
\begin{equation}\label{eqn:spectral_invariants}
[z^a\lambda^b] (D_V(z,\lambda) - D_{V'}(z,\lambda)) = 0, \text{ for all } (a,b).
\end{equation}

Treating the values taken by $V$ and $V'$ on $\ZZ^d / Q\ZZ$ as indeterminates $v_i$ and $v'_i$, \eqref{eqn:spectral_invariants} becomes a system of polynomials. These polynomials are called the spectral invariants, and together they generate the ideal of spectral invariants; compare with Definition~\ref{defn:idealSpectralInvariants}.

\begin{example}
Consider the case where $Q = (3,2)$. The Floquet matrix $L_V$ of $\Delta+V$ for the square lattice with periodicity depicted in Figure~\ref{fig:squarelat} is \[ L_V = \begin{pmatrix}
    V(1,1) & 1 & z_1^{-1} & 1+ z_2^{-1} & 0 & 0 \\
    1 & V(2,1) & 1 & 0 & 1+ z_2^{-1} & 0 \\ 
    z_1 & 1 & V(3,1) & 0 & 0 & 1+z_2^{-1}\\
     1+ z_2 & 0 & 0 & V(1,2) & 1 & z_1^{-1} \\
    0 & 1+ z_2 & 0 & 1 & V(2,2) & 1  \\ 
     0 & 0 & 1+ z_2 & z_1 & 1 & V(3,2)\\
\end{pmatrix}.\] 
Computing the spectral invariants for $V' = \bfz$, one finds there are more minimal generators than variables. That is, the corresponding ideal of spectral invariants $E_{\Delta+V'} = E_{\Delta}$ is not a complete intersection; however, as $\bfz$ is a trivial solution we know the variety of $E_\Delta$ is nonempty. Symbolic computation reveals that $E_\Delta$ has degree $51$ and the vanishing set contains only $\bfz$. When $V'$ is instead chosen to be generic, the degree of $E_{\Delta+V'}$ is $12$, and the points in the vanishing set of $E_{\Delta+V'}$ arise exactly from the symmetries of the graph depicted in Figure~\ref{fig:squarelat} \cite{kapii}.  \qedhere
\end{example}

\begin{figure}
    \centering
    \includegraphics[width=0.4\linewidth]{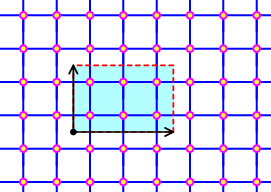}
    \caption{The square lattice with a highlighted fundamental domain that is $(3,2)\ZZ$-periodic.}
    \label{fig:squarelat}
\end{figure}

\subsection{Proof of Theorem~\ref{Cor:FloquetMain}}~\label{Sec:FloqCor}
Let $Q = (q_1,\dots, q_d) \in \NN^d$ and consider a $Q\ZZ$-periodic potential $V$. As sketched in the introduction, Theorem~\ref{Cor:FloquetMain} follows after establishing two cases:
\begin{enumerate} 
        \item \label{case 1} when the periods are small ($q_i \leq 3$ for all $1\leq i \leq d$), and
        \item \label{case 2} when the periods are large ($q_i > 3$ for some $1\leq i \leq d$).
    \end{enumerate}
Case (1) is addressed in Section \ref{sec: small periods} while case (2) is addressed in Section \ref{sec: large periods}. 

When $P\ZZ$ is a rectangular subgroup of $Q\ZZ$, that is $P = (p_1, \dots, p_d) \in \NN^d$ such that $q_i | p_i$ for each $i$, we write $V_P$ for the $P\ZZ$-periodic potential $V_P(n) = V(n)$. Letting $P/Q = (\frac{p_1}{q_1}, \dots, \frac{p_d}{q_d})$, it is well known that $D_{V_P}(z,\lambda)$ and $D_{V}(z,\lambda)$ have the relationship
\begin{equation}~\label{eq:finFourier}
D_{V_P}(z^{P/Q},\lambda) = \prod_{\mu \in U_{P/Q}} D_V(\mu z,\lambda),
\end{equation}
where $U_{P/Q} = U_{p_1/q_1} \times \cdots \times U_{p_d/q_d}$ and $U_{p_i/q_i}$ is the set of $(p_i/q_i)$-th roots of unity. For the reader's convenience, we show how to obtain~\eqref{eq:finFourier}. See~\cite{KuchBook} for more details.

\begin{proof}[Proof of ~\eqref{eq:finFourier}]
Fix $z \in \TT^d$, and consider the space of quasiperiodic functions $\psi$ on $\ZZ^d$ satisfying \begin{equation}\label{eq:newquasi} \psi(n+p_j \delta_j) = z_j^{\frac{p_j}{q_j}} \psi(n) , \text{ for all } n \in \ZZ^d \,\, \text{and } j \in [d].\end{equation}

As such quasiperiodic functions are determined by their values on the fundamental domain $\widehat{P} = \{ (n_1,\dots, n_d) \mid 0 \leq n_i < p_i \} \subset \ZZ^d$, we may represent each quasiperiodic function of ~\eqref{eq:newquasi} as a vector of their values on $\widehat{P}$, thus $\Delta + V_P$ acts on $\psi$ as finite matrix multiplication by $L_{V_P}(z^{P/Q})$, which has characteristic polynomial $D_{V_P}(z^{P/Q},\lambda)$. 

Let $G := Q\ZZ / P\ZZ = \prod_i \ZZ/(p_i/q_i)\ZZ$, and represent elements of $G$ on $\ZZ^d$ by $a \in G \mapsto \widehat{a} = aQ = (a_1 q_1, \dots, a_d  q_d) \in \ZZ^d$. Given any quasiperiodic function $\psi$ satisfying ~\eqref{eq:newquasi}, define \begin{equation*} \demph{\psi_\rho(n)} := \frac{1}{|G|} \sum_{a \in G} \overline{\rho^a}  \psi(n+ \widehat{a}), \text{ for } \rho \in U_{P/Q},\end{equation*} where $\rho^a = \rho_1^{a_1} \cdots \rho_d^{a_d}$.   Finite Fourier inversion yields \begin{equation*} \psi(n) = \sum_{\rho \in U_{P/Q}} \psi_{\rho}(n).\end{equation*}  Moreover, by direct computation, we have that 
\begin{equation*}~\label{eq:neqbasis}  \psi_{\rho}(n+ q_j\delta_j) = \rho_j z_j \psi_{\rho}(n), \text{ for all } n\in\ZZ^d  \text{ and } j \in [d].\end{equation*}
That is, each $\psi_\rho$ is a $Q\ZZ$-quasiperiodic function with Floquet multiplier $\rho z$, and is thus determined by its values on $\widehat{Q} = \{ (n_1,\dots, n_d) \mid 0 \leq n_i < q_i \} \subset \ZZ^d$.  Now let us consider how $\Delta + V_P$ acts on each $\psi_\rho$, in particular, we will show that $(\Delta + V_P) \psi_\rho$ only depends on $\psi_\rho$. We do this by looking at the action of $\Delta$ and $V_P$ on $\psi_\rho$ separately, via direct computation. For $\Delta$, we have \begin{equation*} \begin{aligned}
    & \Delta \psi_\rho(n)  =\sum_{|n-n'|_1=1} \psi_\rho(n').
\end{aligned} 
\end{equation*}   Noting that $V_P$ is $Q\ZZ$-periodic, we have that $V_P(n+\widehat{a}) =V_P(n)$ for all $a \in G$ and so \begin{equation*}~\label{eq:potentialrhoact} \begin{aligned}
    &V_P \psi_\rho(n) = V_P\frac{1}{|G|} \sum_{a \in G} \overline{\rho^a}  \psi(n+\widehat{a}) =  \frac{1}{|G|} \sum_{a \in G} \overline{\rho^a} V_P(n+\widehat{a}) \psi(n+\widehat{a})\\
    &= \frac{1}{|G|} \sum_{a \in G} \overline{\rho^a} V_P(n) \psi(n+\widehat{a}) =  V_P(n)  \frac{1}{|G|} \sum_{a \in G} \overline{\rho^a} \psi(n+\widehat{a}) = V_P(n)  \psi_\rho(n).
\end{aligned} \end{equation*}
As each $\psi_\rho$ is determined by its values on $\widehat{Q}$, we see that $\Delta + V_P$ acts on each $\psi_\rho$ in the same way that $\Delta + V$ does, that is, as multiplication by $L_V(\rho z)$. As the discrete Fourier transform is unitary, it follows that for each $z \in \TT^d$, $\sigma(L_{V_P}(z^{P/Q})) = \bigcup_{\rho \in U_{P/Q}} \sigma(L_V(\rho z))$, and so we obtain \eqref{eq:finFourier}. 
\end{proof}
As sending $z \to z^{P/Q}$ does not affect the spectral invariants, it follows from \eqref{eq:finFourier} that any nonzero $Q\ZZ$-periodic potential Floquet isospectral to $\bfz$ naturally induces a nonzero $P\ZZ$-periodic potential Floquet isospectral to $\bfz$.

The following proposition shows that we can restrict our search for potentials Floquet isospectral to $\bfz$ to $Q\ZZ$-periodic potentials where $q_i \geq 3$ for all $i$.

\begin{proposition}\label{prop: lifting solutions}
    Suppose $Q=(q_1,\dots, q_{d-1}, 1)$ or $Q = (q_1,\dots, q_{d-1}, 2)$. Then there exists a nonzero $Q\ZZ$-periodic potential Floquet isospectral to $\bfz$ if and only if there is a nonzero $(q_1,\dots,q_{d-1})\ZZ$-periodic potential that is Floquet isospectral to $\bfz$.  
\end{proposition}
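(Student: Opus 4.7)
The plan is to handle the reverse direction ($\Leftarrow$) and the two cases of the forward direction ($\Rightarrow$) separately; the only non-routine case is $\Rightarrow$ with $q_d=2$, which I save for last.

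For ($\Leftarrow$), I would take any nonzero $(q_1,\dots,q_{d-1})\ZZ$-periodic $V'$ Floquet isospectral to $\bfz$ on $\ZZ^{d-1}$ and form its constant-in-the-last-coordinate lift $V(n_1,\dots,n_d)\coloneqq V'(n_1,\dots,n_{d-1})$, which is automatically $Q\ZZ$-periodic. When $q_d=1$, the recursive formula \eqref{eq:defmatricesgen} gives $L_V(z)=L_{V'}(z')+(z_d+z_d^{-1})I_{\tilde q}$ and the analogous identity for $\bfz$; the relation $D_V(z,\lambda)=D_{V'}(z',\lambda-z_d-z_d^{-1})$ transfers isospectrality directly. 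When $q_d=2$, one applies \eqref{eq:finFourier} with $P=Q$ and finer period $(q_1,\dots,q_{d-1},1)$ to lift this $q_d=1$ isospectrality to the coarser $Q\ZZ$-isospectrality. The ($\Rightarrow$) direction with $q_d=1$ is immediate: $Q\ZZ$-periodicity forces $V$ to be independent of $n_d$, so $V$ descends to a nonzero $(q_1,\dots,q_{d-1})\ZZ$-periodic potential $V'$ on $\ZZ^{d-1}$, and the same identity $L_V=L_{V'}+(z_d+z_d^{-1})I$ run backwards shows $V'$ is Floquet isospectral to $\bfz$.

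The crux is ($\Rightarrow$) with $q_d=2$. Set $\tilde V_i(n')\coloneqq V(n',i)$ for $i=1,2$; each is a $(q_1,\dots,q_{d-1})\ZZ$-periodic potential on $\ZZ^{d-1}$. The plan is to specialize the equality $D_V(z,\lambda)=D_\bfz(z,\lambda)$ at $z_d=-1$: both off-diagonal blocks $(1+z_d^{\pm 1})I_{\tilde q}$ of $L_V(z',-1)$ vanish by \eqref{eq:defmatricesgen}, leaving a block-diagonal matrix with blocks $L_{\tilde V_1}(z')$ and $L_{\tilde V_2}(z')$; similarly $L_\bfz(z',-1)$ is block diagonal with both blocks equal to the $(d-1)$-dimensional zero-potential matrix. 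This yields the factorization
\[
D_{\tilde V_1}(z',\lambda)\,D_{\tilde V_2}(z',\lambda)=D_{\bfz}(z',\lambda)^2
\]
in $\CC[z_1^\pm,\dots,z_{d-1}^\pm,\lambda]$. The key input I would then invoke is the irreducibility of the zero-potential Bloch polynomial $D_{\bfz}(z',\lambda)$ in this Laurent polynomial ring, a classical fact for the discrete Laplacian. Combined with unique factorization, monicity in $\lambda$, and matching $\lambda$-degrees of both factors, this forces $D_{\tilde V_1}=D_{\tilde V_2}=D_{\bfz}$, so each $\tilde V_i$ is Floquet isospectral to $\bfz$ on $\ZZ^{d-1}$. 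Since $V\not\equiv\bfz$, at least one $\tilde V_i$ is nonzero and furnishes the desired $(q_1,\dots,q_{d-1})\ZZ$-periodic potential.

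The main obstacle is therefore supplying (or sidestepping) the irreducibility of $D_{\bfz}$ in the needed generality. If a direct citation is not convenient, one can instead exploit the \emph{full} identity $D_V(z,\lambda)=D_\bfz(z,\lambda)$ rather than only its $z_d=-1$ specialization: the block-determinant formula yields
\[
\det\!\bigl((L_{\tilde V_1}-\lambda I)(L_{\tilde V_2}-\lambda I)-\mu I\bigr)=\det\!\bigl((L_{\bfz}-\lambda I)^2-\mu I\bigr),\qquad \mu\coloneqq 2+z_d+z_d^{-1},
\]
and matching higher coefficients in $\mu$ produces a chain of trace identities (for example $\sum_{n'}\tilde V_1(n')+\sum_{n'}\tilde V_2(n')=0$ from the subleading coefficient) strong enough to pin down $D_{\tilde V_i}=D_{\bfz}$ without any irreducibility hypothesis; in the small-period regime of the eventual application of the proposition, either route is verifiable by hand.
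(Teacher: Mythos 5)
Your proposal is correct and follows essentially the same route as the paper: specialize $z_d = -1$ in \eqref{eq:defmatricesgen} to obtain the block-diagonal factorization $D_V(z',-1,\lambda)=D_{\tilde V_1}(z',\lambda)D_{\tilde V_2}(z',\lambda)$, then invoke irreducibility of the Bloch polynomial (which, like you, the paper cannot prove from scratch and must cite) to pin down each factor. The one small variation is that the paper quotes irreducibility of $D_{\tilde V_i}$ for \emph{every} potential (citing fg, flm22), whereas you observe that irreducibility of $D_{\bfz}$ alone suffices once you combine the UFD property of $\CC[z'^{\pm},\lambda]$ with the matching $\lambda$-degrees and monicity of the two factors; this is a legitimate and slightly leaner version of the same step, though in practice the cited references establish the stronger all-potentials statement anyway.
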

\begin{proof}
    Notice that one direction is trivial; in particular, if there is a nonzero $(q_1,\dots,q_{d-1})\ZZ$-periodic potential Floquet isospectral to $\bfz$, then it is clear by equations \eqref{eq:defmatricesgen} (case $q_d=1$) and \eqref{eq:finFourier} that there is a nonzero $Q\ZZ$-periodic potential that is Floquet isospectral to $\bfz$. 

  When $q_d = 1$, the remaining direction follows from absorbing $z_d+z_d^{-1}$ into $\lambda$ in the characteristic matrix. It remains to consider when $q_d =2$.  
  
  Suppose that there is a nonzero $Q\ZZ$-periodic potential Floquet isospectral to $\bfz$. By equation \eqref{eq:defmatricesgen},
\[
           L_{V}=\begin{pmatrix}
            L_{\tilde{V}_1} & I_{\tilde{Q}} + z_d^{-1} I_{\tilde{Q}} \\
            I_{\tilde{Q}} + z_d I_{\tilde{Q}}  & L_{\tilde{V}_2} \\
        \end{pmatrix}.\]

\noindent Thus, $D_V(z_1,\dots,z_{d-1},-1,\lambda) = \det(L_{\tilde{V}_1} -\lambda I) \det(L_{\tilde{V}_2} -\lambda I)$. 

As each $\det(L_{\tilde{V}_1} -\lambda I)$ is irreducible~\cites{fg,flm22} for all potentials $V$, both $\det(L_{\tilde{V}_1} -\lambda I) = \det(L_{\tilde{\bfz}} -\lambda I)$ and $\det(L_{\tilde{V}_2} -\lambda I) = \det(L_{\tilde{\bfz}} -\lambda I)$. That is, both $\tilde{V}_1$ and $\tilde{V}_2$ must be $(q_1,\dots,q_{d-1})\ZZ$-periodic potentials that are Floquet isospectral to $\bfz$. As we assumed that $V$ is nonzero, at least one of $\tilde{V}_1$ or $\tilde{V}_2$ must be nonzero, and thus there must exist a nonzero $(q_1,\dots,q_{d-1})\ZZ$-periodic potential that is isospectral to $\bfz$.
\end{proof}

With Proposition \ref{prop: lifting solutions} established and assuming the results given in Section \ref{sec: large periods}, we can now give a complete proof of Theorem~\ref{Cor:FloquetMain}.

\begin{proof}[Proof of Theorem~\ref{Cor:FloquetMain}]
    Let $Q = (q_1,\dots, q_d) \in \NN^d$. Due to Proposition \ref{prop: lifting solutions}, we are free to restrict our search to periods $Q\ZZ$ such that $q_i \geq 3$ for all $1\leq i \leq d$. Thus, when the periods are small (that is, case (\ref{case 1})) it suffices to show that a nonzero $(3\ZZ)^d$-periodic potential isospectral to $\bfz$ cannot exist. It can be shown that no nonzero $(3\ZZ)^d$-periodic potentials isospectral to $\bfz$  exist via symbolic computation for $d \leq 2$, and consequently this extends to periods $Q$ such that all $q_i \leq 3$ and at most two of the $q_i$ equal $3$. When the periods are large (that is, case (\ref{case 2})), there is at least one $1\leq i \leq d$ such that $q_i>3$. By equation (\ref{eq:finFourier}), it suffices to show there is a nonzero $P\ZZ$-periodic potential isospectral to $\bfz$ where $P = (1,\dots, 1,q_i)$. Equivalently, by Proposition \ref{prop: lifting solutions}, we can show the existence of a nonzero $q_i\ZZ$-periodic potential isospectral to $\bfz$. That is, we have reduced the problem to 1-dimension, which is then proven in Theorem~\ref{thm:SolutionExistsForq>4}.
\end{proof}

Section \ref{sec: large periods} contains a complete proof of the 1-dimensional case of Theorem~\ref{Cor:FloquetMain} (by combining Theorem~\ref{thm:SolutionExistsForq>4} and Remark \ref{remark: 1D small period case}). 

\subsection{The case of large periods}\label{sec: large periods}
We now consider the case where there exists some $q_i > 3$. As shown in the proof of Theorem~\ref{Cor:FloquetMain} in Section \ref{Sec:FloqCor}, this case reduces to checking if there exists a $q\ZZ$-periodic potential isospectral to $\bfz$ where $q > 3$. Taking the characteristic polynomial of the matrix in equation (\ref{eq:defmatricesdim1}), we see that the support of $D_V(z,\lambda)$ is exactly $(\pm 1,0)$ and $(0, i)$ for $i \in [q]$. Consider the system of spectral invariants from equation (\ref{eqn:spectral_invariants}) that determine when $V$ and $\bfz$ are Floquet isospectral:
\begin{equation*}\label{eqn: 0 spectral invariants}
[z^a\lambda^b] D_V(z,\lambda) - [z^a\lambda^b]D_{\bfz}(z,\lambda) = 0, \text{ for all } (a,b).
\end{equation*}

Notice that $[z^a\lambda^b] D_V(z,\lambda) - [z^a\lambda^b]D_{\bfz}(z,\lambda)$ is $0$ except for when $a = 0$ and $b \in [q-1]$. Thus, let $S_i := [\lambda^i] D_V(z,\lambda) - [\lambda^i]D_{\bfz}(z,\lambda)$ be our spectral invariants, and let $E_\Delta = ( S_0, \dots, S_{n-1} )$ be our ideal of spectral invariants.  As $z$ does not appear among the ideal of spectral invariants, we may fix $z$ to $1$. It follows that $E_\Delta$ is the ideal of spectral invariants for the following matrix:

    \begin{equation*} \label{eq:matwithlambdim1}
        M = \begin{pmatrix}
            0 & 1 & 0 & \dots & 0 & 1 \\
            1 & 0 & 1 & 0 & \dots & 0 \\
            0 & 1 & \ddots & \ddots & \ddots & \vdots \\
            \vdots & \ddots & \ddots & \ddots & \ddots & 0\\ 
            0 & 0 & \dots & 1 & 0 & 1 \\
            1 & 0 & \dots & 0 & 1 & 0  \\
        \end{pmatrix}.
    \end{equation*}

We immediately obtain:

\begin{theorem} \label{thm:SolutionExistsForq>4}
    For all $q \geq 4$, the vanishing set of $E_\Delta$ contains non-zero solutions.
\end{theorem}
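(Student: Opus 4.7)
The plan is to recognize the matrix $M$ as the adjacency matrix of a weighted simple graph and then invoke Corollary~\ref{cor: adjacency matrix}. Concretely, $M$ is the adjacency matrix of the cycle graph $C_q$ on $q$ vertices, with all edge weights equal to $1$: the nonzero entries are precisely $M_{i,i+1} = M_{i+1,i} = 1$ for $1 \le i \le q-1$ together with the two ``wraparound'' entries $M_{1,q} = M_{q,1} = 1$. For $q \ge 4$, the cycle $C_q$ has edges (so it is not edgeless) and it is not complete (since any two non-adjacent vertices on $C_q$ exist as soon as $q\ge 4$). Thus the hypotheses of Corollary~\ref{cor: adjacency matrix} are satisfied, and there exists a nonzero diagonal matrix $D$ such that $M$ and $M+D$ share the same characteristic polynomial. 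Equivalently, the diagonal entries of $D$ form a nonzero element of the vanishing set of $E_\Delta$.

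If one preferred to invoke Theorem~\ref{thm:IntroMain} directly rather than its corollary, the same conclusion follows by exhibiting two distinct principal $2\times 2$ minors of $M$. For any pair $\{i,j\} \subset [q]$ of adjacent vertices on the cycle, the corresponding principal $2\times 2$ minor equals $\det\!\bigl(\begin{smallmatrix} 0 & 1 \\ 1 & 0\end{smallmatrix}\bigr) = -1$, whereas for any pair of non-adjacent vertices (which exists precisely when $q \ge 4$), the corresponding principal $2\times 2$ minor is $0$. Hence the principal $2\times 2$ minors of $M$ are not all equal, so Theorem~\ref{thm:IntroMain} again guarantees a nonzero diagonal shift preserving the spectrum.

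There is essentially no obstacle here: the statement has been engineered so that the main theorem and its graph-theoretic corollary apply verbatim. The only item worth verifying carefully is that a nonzero $D$ for the reduced matrix $M$ (obtained by setting $z = 1$ in $L_V$) genuinely produces a nonzero element of the vanishing set of the ideal $E_\Delta$ attached to the full Laurent-polynomial Floquet matrix. This is immediate from the discussion preceding the theorem: the spectral invariants $S_i$ of $L_V$ with respect to the zero potential have no dependence on $z$, and therefore coincide with the spectral invariants of the specialization $M = L_V(1)$ viewed as the adjacency matrix of $C_q$. Consequently the same diagonal $D$ lies in $\mathcal{V}(E_\Delta)$, completing the proof.
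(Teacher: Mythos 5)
Your proof is correct and follows essentially the same route as the paper: the paper's argument is the one-line observation that $E_\Delta = E_M$ and then an appeal to Corollary~\ref{cor: adjacency matrix}. You simply flesh out what the paper leaves implicit (identifying $M$ as the adjacency matrix of the weighted cycle $C_q$, verifying that $C_q$ is neither edgeless nor complete for $q \ge 4$, and offering the equivalent direct check that the $2\times 2$ principal minors are not all equal), which is a faithful expansion of the same argument rather than a different one.
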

\begin{proof}
As $E_\Delta = E_M$, the claim follows from Corollary~\ref{cor: adjacency matrix}. 
\end{proof}
\begin{rmk}
    For any fixed $q \geq 4$, the ideal of spectral invariants has a non-trivial solution over any algebraically closed field of characteristic zero or $p > 0$ as long as $p > q$. Moreover, over $\CC$, the number of non-zero solutions is always even (and positive) because it is invariant under complex conjugation and there are no nonzero real solutions since $v_1^2 + \ldots + v_q^2 = e_1^2 - 2 e_2$ lies in the ideal of spectral invariants. When $q$ even, explicit solutions were found in~\cite{flmrp}.
\end{rmk}

\begin{rmk}\label{remark: 1D small period case}
    Note that when $q=1, q=2,$ and $q=3$, the ideal of spectral invariants are either $(e_1(v_1))$, $(e_1(v_1,v_2), e_2(v_1,v_2))$ or $(e_1(v_1,v_2,v_3), e_2(v_1,v_2,v_3), e_3(v_1,v_2,v_3) - e_1(v_1,v_2,v_3))$ respectively, all of which have only the trivial solution at the origin. This completes the proof of Theorem~\ref{Cor:FloquetMain} in the $1$-dimensional case.
\end{rmk}

\subsection{A Comment on the Case of Small Periods.}\label{sec: small periods}
As one can see, Theorem~\ref{Cor:FloquetMain} does not yield a complete classification of when there exist non-zero $Q\ZZ$-periodic potentials isospectral to $0$. This shortcoming is only due to a lack of understanding for the case when $Q\ZZ = (3\ZZ)^d$ for $d \geq 3$. For $d=2$, one can symbolically show either that all elementary symmetric polynomials are in $E_\Delta$, or that the degree of $E_\Delta$ and the degree of the ideal of its tangent cone agree, so $V = \bfz$ is the only solution. We verified both using the Macaulay2~\cite{M2} computer algebra system. 

Dealing with these remaining cases would be desirable, with $(3\ZZ)^3$ as the smallest open case. Unfortunately, for this case, we were unable to construct a dimension reduction trick as done in Proposition~\ref{prop: lifting solutions}. Even obtaining the generators of $E_\Delta$ explicitly in this case does not appear to be computationally feasible, at least not with the methods we attempted on the hardware to which we had access. Interestingly, this is also the smallest case in which it is unknown whether the band edges can admit codimension $1$ Fermi varieties~\cite{fk2}.

\section*{Acknowledgments}

Cobb acknowledges the support of the National Science Foundation Grant DMS-2402199. Faust was partially supported by the National Science Foundation DMS--2052519 grant.  The authors also thank Ilya Kachkovskiy, Wencai Liu, Luke Oeding, and Frank Sottile for various useful correspondences during the preparation of this work. We also thank Frank Sottile for providing the graphic. 
\bibliographystyle{abbrv} 
	\bibliography{final}
	
\end{document}